\newif\iftechreport
\newtheorem{theorem}{Theorem}
\newtheorem{definition}{Definition}
\newtheorem{lemma}{Lemma}
\newtheorem{corollary}{Corollary}
\newtheorem{remark}{Remark}
\newtheorem{observation}{Observation}
\newtheorem{proposition}{Proposition}
\newtheorem{example}{Example}
\newcommand{\N}{\mathbb{N}}
\renewcommand{\R}{\mathbb{R}}
\newcommand{\Z}{\mathbb{Z}}
\renewcommand{\H}{\mathcal{H}}
\renewcommand{\D}{\mathcal{D}}
\newcommand{\X}{\mathcal{X}}
\DeclareMathOperator*{\argmin}{arg\,min}
\newcommand{\NP}{\NPcomplexity}
\newcommand{\wce}{\textsc{WCE-FEAS}}
\newcommand{\sce}{\textsc{SCE-FEAS}}
\newcommand{\NP}{\ensuremath{\mathbb{NP}}}
\newcommand{\N}{\mathbb{N}}
\newcommand{\R}{\mathbb{R}}
\newcommand{\Z}{\mathbb{Z}}
\renewcommand{\H}{\mathcal{H}}
\newcommand{\D}{\mathcal{D}}
\newcommand{\X}{\mathcal{X}}
\newcommand{\wce}{\textsc{WCE-FEAS}}
\newcommand{\sce}{\textsc{SCE-FEAS}}
\begin{document}

\iftechreport

\title{Counterfactual Explanations for Integer Optimization Problems }
\author[1]{Felix Engelhardt}
\affil[1]{RWTH Aachen University}
\author[2]{Jannis Kurtz}
\author[2]{Ş. İlker Birbil}
\affil[2]{University of Amsterdam, Plantage Muidergracht 12, 1018 TV
  Amsterdam}
\author[3]{Ted Ralphs}
\affil[3]{Department of Industrial and Systems Engineering, Lehigh University,
Bethlehem, PA 18015}

\else


\RUNAUTHOR{Engelhardt et al.} 

\RUNTITLE{Counterfactual Explanations for Integer Optimization Problems}

\TITLE{Counterfactual Explanations for Integer Optimization Problems} 

\AUTHOR{Felix Engelhardt}
\AFF{RWTH Aachen University, Templergraben 55, 52062 Aachen  \EMAIL{engelhardt@combi.rwth-aachen.de}}

\AUTHOR{Jannis Kurtz, Ş. İlker Birbil}
\AFF{University of Amsterdam, Plantage Muidergracht 12, 1018 TV Amsterdam, \EMAIL{\{j.kurtz,s.i.birbil\}@uva.nl}}

\AUTHOR{Ted Ralphs}
\AFF{Lehigh University, 200 W. Packer Ave.
Bethlehem, PA 18015-1582, \EMAIL{ted@lehigh.edu}}

\ABSTRACT{%
Counterfactual explanations (CEs) offer a human-understandable way to explain decisions by identifying specific changes to the input parameters of a base or present model that would lead to a desired change in the outcome. For optimization models, CEs have primarily been studied in limited contexts and little research has been done on CEs for general integer optimization problems. In this work, we address this gap. We first show that the general problem of constructing a CE is $\Sigma_2^p$-complete even for binary integer programs with just a single mutable constraint. Second, we propose solution algorithms for several of the most tractable special cases: (i) mutable objective parameters, (ii) a single mutable constraint, (iii) mutable right-hand-side, and (iv) all input parameters can be modified. We evaluate our approach using classical knapsack problem instances, focusing on cases with mutable constraint parameters. Additionally, we present experiments on the resource constrained shortest path problem.
}%

\FUNDING{This research was supported by the Deutsche Forschungsgemeinschaft
(DFG, German Research Foundation) – 2236/2.}

\KEYWORDS{Counterfactual Explanations, Integer Optimization, Complexity, Constraint Generation, Inverse Optimization, Explainability} 

\fi

\maketitle

\iftechreport

\begin{abstract}
Counterfactual explanations (CEs) offer a human-understandable way to explain decisions by identifying specific changes to the input parameters of a \emph{base} or \emph{present} model that would lead to a desired change in the outcome. For optimization models, CEs have primarily been studied in limited contexts and little research has been done on CEs for general integer optimization problems. In this work, we address this gap. We first show that the general problem of constructing a CE is $\Sigma_2^p$-complete even for binary integer programs with just a single \emph{mutable} constraint. Second, we propose solution algorithms for several of the most tractable special cases: (i) mutable objective parameters, (ii) a single mutable constraint, (iii) mutable right-hand-side, and (iv) all input parameters can be modified. We evaluate our approach using classical knapsack problem instances, focusing on cases with mutable constraint parameters. Our results show that our methods are capable of finding optimal CEs for small instances involving up to $40$ items within a few hours. Additionally, we present experiments on the resource constrained shortest path problem.

\vspace{0.2cm}
\noindent \textbf{Keywords}: counterfactual explanations, integer optimization, complexity, constraint generation
\end{abstract}
\fi

\section{Introduction}

Automated decision-making increasingly shapes critical aspects of modern society, ranging from applications in medical scheduling and financial planning to supply chain optimization and disaster management. Many of these applications are modeled as integer optimization problems (IPs) and are solved by state-of-the-art methods, such as branch and cut. Despite their widespread use, the reasoning behind optimization-based decisions is often unclear to those affected. For instance, optimization algorithms are used for automated planning of surgeries in hospitals \citep{OR_review_2011,ORHC_review_2025}. A patient denied an earlier surgery date may ask for an explanation for this decision. In humanitarian logistics, optimal robust depot locations are calculated to quickly dispatch relief items to a region affected by a disaster \citep{stienen2021optimal}. Here, the properties of an optimal solution significantly depend on the choice of optimization model and uncertainty set. Officials from a certain region may ask for an explanation why their region did not receive a depot while another region did.

One established way to provide explanations for decisions made by automated systems is based on the paradigm of counterfactual explanations (CEs). A CE is a minimal change in model inputs that would lead to a desired change in outcome. For example, in the aforementioned depot location problem, a CE could be: “If the population in your region were 5\% higher and the transportation time to region~B were 3\% smaller, it would be optimal to open a depot in your region.” Counterfactuals have become a cornerstone of explainable artificial intelligence that offers individuals understandable and actionable insights into automated predictions \citep{Verma2024,maragno2022,maragno2024finding}. Their role in optimization problems--particularly those involving integrality constraints--has only begun to be explored. First works studied the concept of counterfactuals for binary linear optimization problems where only objective parameters are mutable \citep{korikov2021counterfactual,korikov2021counterfactualGDPR,korikov2023objective}, drawing on techniques from constraint programming and inverse optimization. Later, \cite{kurtz2024} identified three types of counterfactuals of interest for optimization problems—weak, strong, and relative CEs—and analyzed them for linear optimization with continuous decision variables. In \cite{lefebvre2025computing} the authors derive a heuristic for calculating weak CEs based on a penalty alternating direction method.  Relative CEs have since been investigated for contextual stochastic optimization \citep{ramirez2025relative}, and CEs have been used to explain efficiency targets in data envelopment analysis \citep{bogetoft2024counterfactual} and to detect minimum number of constraints to make a problem infeasible in constraint programming \citep{gupta2024counterfactual}. Coming back to the case of integer optimization, however, the aforementioned three types of CEs remain largely unexplored, particularly when mutable parameters appear in the constraints.  

Constructing CEs is closely related to a number of existing areas of research in optimization. The most obvious connection is to global sensitivity analysis, but rather than quantifying how outcomes vary under systematic parameter changes, CEs ask how parameters must be minimally changed to induce specified outcome changes. This perspective highlights the relationship to inverse optimization---indeed, one may view CE construction as a generalized form of inverse optimization \citep{chan2021inverse,Wang2013}. It also highlights connections to bilevel optimization: the model structures induced by weak CEs are similar to those in optimistic bilevel formulations \citep{dempe2015bilevel,dempefoundations,kleinert2021survey}, whereas strong CEs naturally correspond to pessimistic bilevel formulations \citep{wiesemann2013pessimistic}. Certain special cases connect directly to well-studied problem classes. The case in which only the objective function can be modified subsumes the most classical version of inverse optimization, in which one seeks an objective that makes a given solution optimal. The case in which only the right-hand-side can vary relates both to multiobjective optimization via the restricted value function and to the standard form of mixed integer bilevel linear optimization \citep{dempefoundations}. Finally, the most general case considered here is equivalent to a bilevel optimization model in which the lower-level problem is a mixed integer bilinear optimization problem.

This paper fills a gap in the literature by studying weak and strong counterfactual explanations for integer linear optimization problems (ILPs) in which mutable parameters may appear both in the objective function and in the constraints. Our contributions are as follows:

\begin{itemize}
\item We present complexity results showing that constructing CEs in both the weak and strong cases is $\Sigma_2^p$-hard, even if the mutable parameters only appear in a single constraint.
\item We propose solution algorithms to calculate weak and strong CEs for the case when only a single constraint may contain mutable parameters. We distinguish four different cases: the mutable parameters appear (i) only in the objective function, (ii) only in the constraints, (iii) only on the right-hand side of the constraints, or (iv) everywhere in the problem.
\item We present computational experiments based on knapsack instances from \texttt{kplib}, showing that optimal CEs can be computed in a few hours for instances containing up to $40$ items. Additionally, we evaluate applications of CEs regarding resource constraint shortest path problems.  
\end{itemize}

\section{Definitions}
\label{sec:problem_definition}

In the following, we describe the basic definitions and the mathematical framework on which our study is based. To formally define the notion of a counterfactual explanation, we first need to define a so-called \emph{present optimization problem} on which the current decision, which we refer to as the \emph{present solution}, is based. In this paper, we consider the following form for the present problem:
\begin{equation}\label{eq:knapsack_problem}
    \begin{aligned}
    \min \ & \hat c^\top x \\
    s.t. \quad & \hat A x\ge \hat b, \\
    & x\in\X,
\end{aligned}
\end{equation}
where $\hat c\in \Z^n$ (the objective function), $\hat b\in\Z^n$ (the right-hand-side), and $\hat A\in\Z^{m\times n}$ (the constraint matrix) are the present values of the mutable parameters, i.e., those that can be modified to obtain a CE. The set $\X\subseteq \mathbb Z_+^n$ is the immutable feasible set, consisting of constraints that cannot be modified. In many applications, the set $\X$ is the feasible set of a MILP, though for the algorithms described later, we only require the set $\X$ be the feasible set of an optimization problem for which we have an appropriate black-box solver. 

To reason about counterfactuals, we introduce two additional sets. The \textit{favored solution space}, denoted by $\D\subseteq \mathbb Z_+^n$, consists of solutions that exhibit desirable properties and that we attempt to make optimal by modifying the mutable parameters to generate a CE. The \textit{mutable parameter space}, denoted by $H \subseteq \Z^n \times \Z^{m \times n} \times \Z^n$, are the triples $(c, A, b)$ of values that the mutable parameters are allowed to take on in attempting to generate a CE. We assume that $\D \cap \X \neq \emptyset$ and that $\H$ is bounded and contains $(\hat c,\hat A, \hat b)\in \H$. 

The structure of $\H$ depends on the nature of the application, but the following are the most typical ways in which the mutable parameter space would be specified:
\begin{enumerate}
    \item Only objective function parameters are mutable: $\H = \H_c \times \{ \hat A\} \times \{ \hat b\}$
    \item Only constraint parameters are mutable: $\H = \mathcal \{\hat c\} \times \H_A \times  \H_b $ 
    \item Only the right-hand side parameters are mutable: $\H = \mathcal \{\hat c\} \times \{\hat\H_A\} \times  \H_b $ 
    \item All parameters are mutable: $\H = \H_c \times \H_A \times \H_b $ 
\end{enumerate}
In principle, $\H$ can be any (MILP-representable) set. In the remainder of the paper, we choose $\H$ to be a discrete set for two reasons. First, in most applications, parameter values are naturally discrete. Even if we allow the parameters to take on rational values, we can still scale them to be integral without loss of generality. Second, allowing parameter values to be continuous can result in feasible sets that are open, which leads to difficulties in constructing CEs; see \cite{kurtz2024} for a discussion on open feasible regions. 

We now define the problem of constructing weak and strong CEs. 
Given a present problem of the form~\eqref{eq:knapsack_problem}, a favored solution space $\D$, and a mutable parameter space $\H$, a \emph{weak CE} is any $(c,A,b) \in \H$ such that \emph{some} optimal solution of the resulting modified instance lies in $\D$, i.e., fulfills the desired properties. 
\begin{definition}\label{def:weak_CE}
    A weak counterfactual explanation for \eqref{eq:knapsack_problem} is a point $(c,A,b)\in \H$ that satisfies the weak CE condition:
    \begin{equation}\label{eq:weak_CE_definition}
        \begin{aligned}
             \D \cap \argmin_{x\in \X: A x\ge b} c^\top x \neq \emptyset.
        \end{aligned}
    \end{equation}
\end{definition}
\noindent In general, we are interested in CEs that modify the present problem as little as possible. To measure the amount of modification, we utilize a distance measure $\delta: \H \times \H \to \R_{\ge 0}$ and define the \emph{cost} of a weak CE to be $\delta\left( (c,A,b), (\hat c, \hat A, \hat b)\right)$. We call a weak CE with minimal cost an \emph{optimal weak CE}. An optimal weak CE can be computed by solving the following optimistic bilevel problem.
\begin{equation}\label{eq:weak_CE_problem}
    \begin{aligned}
            \min_{c,A,b,x} \ & \delta\left( (c,A,b), (\hat c, \hat A, \hat b)\right) \\
            s.t.\quad & x\in \argmin_{x\in \X: A x\ge b} c^\top x ,\\
            & x\in \D, \\
            & (c,A,b) \in \H.
        \end{aligned}
\end{equation}

When a problem has multiple optimal solutions, either due to its inherent structure or the characteristics of the solver used, a decision maker may want to ensure that \emph{all} optimal solutions satisfy the desired properties specified by $\D$. This motivates the concept of a \emph{strong CE}.
\begin{definition}\label{def:strong_CE}
A strong counterfactual explanation for~\eqref{eq:knapsack_problem} is a point $(c,A,b)\in \H$ that satisfies the strong CE condition:
    \begin{equation}\label{eq:strong_CE_definition}
        \begin{aligned}
           \argmin_{x\in \X: A x\ge b} c^\top x \subseteq \D.
        \end{aligned}
    \end{equation}
\end{definition}
\noindent Once again, for a given distance measure $\delta$, the cost of a strong CE is then given by $\delta\left( (c,A,b), (\hat c, \hat A, \hat b)\right)$. We call a strong CE with minimal cost $\delta$ an \emph{optimal strong CE}. An optimal strong CE can be computed by solving the following pessimistic bilevel problem:
\begin{equation}\label{eq:strong_CE_problem}
    \begin{aligned}
            \min_{c,A,b,x} \ & \delta\left( (c,A,b), (\hat c, \hat A, \hat b)\right) \\
            s.t.\quad & x\in \D \quad \forall x\in \argmin_{x\in \X: A x\ge b} c^\top x, \\
            & (c,A,b) \in \H.
        \end{aligned}
\end{equation}
To illustrate the definitions of weak and strong CEs, let us give a toy example.
\begin{example}
Consider the following instance of \eqref{eq:knapsack_problem}:
\begin{align*}
    \min \ & x_1+2x_2+2x_3 \\
    s.t. \quad & x_1+3x_2+2x_3 \ge 3, \\
    & x\in \{0,1\}^3 .
\end{align*}
The optimal solution is to select only $x_2=1$ and to set all other variables to zero. Assume that only the constraint parameters $\hat a_2=3$ and $\hat a_3 = 2$ are mutable, each in the range $\{0, 1, 2, 3, 4\}$. Here is the counterfactual question: ``What is the minimal change in the mutable parameters we need to perform, such that a solution with $x_3 = 1$ is optimal, i.e.,
$\D = \{ x\in \{0,1\}^3: x_3 = 1 \}$?'' Figure \ref{fig:CE_example} shows the feasible regions for the weak and the strong CEs for this example.
\begin{figure}[H]
    \centering
    
\newcommand{\diagonalFill}[5]{%
  \begin{scope}
    \clip (#1,#2) rectangle ++(1,1);
    \fill[#3] (#1,#2+1) -- (#1+1,#2+1) -- (#1+1,#2) -- cycle; 
    \fill[#4] (#1,#2+1) -- (#1,#2) -- (#1+1,#2) -- cycle;     
    \draw[#5, thick] (#1,#2+1) -- (#1+1,#2);
  \end{scope}
}

\begin{tikzpicture}[scale=1]
\definecolor{outside}{RGB}{230,230,230}
\definecolor{strongCF}{RGB}{200,255,200}
\definecolor{weakCF}{RGB}{255,255,200}

\definecolor{cell00}{RGB}{230,230,230}
\definecolor{cell10}{RGB}{200,255,200}
\definecolor{cell20}{RGB}{200,255,200}
\definecolor{cell30}{RGB}{200,255,200}

\definecolor{cell01}{RGB}{200,255,200}
\definecolor{cell11}{RGB}{255,255,200}
\definecolor{cell21}{RGB}{200,255,200}
\definecolor{cell31}{RGB}{200,255,200}

\definecolor{cell02}{RGB}{230,230,230}
\definecolor{cell12}{RGB}{230,230,230}
\definecolor{cell22}{RGB}{255,230,200}
\definecolor{cell32}{RGB}{200,255,200}

\definecolor{cell03}{RGB}{230,230,230}
\definecolor{cell13}{RGB}{230,230,230}
\definecolor{cell23}{RGB}{230,230,230}
\definecolor{cell33}{RGB}{255, 230, 200}

\foreach \x in {0,1,2,3} {
    \foreach \y in {0,1,2,3} {
        \fill[cell\x\y] (\x,\y) rectangle ++(1,1);
    }
}

\diagonalFill{1}{1}{strongCF}{strongCF}{green}
\diagonalFill{0}{1}{strongCF}{outside}{green}
\diagonalFill{1}{0}{strongCF}{outside}{green}

\def\xstart{5.5}
\def\ystart{0}

\filldraw[fill=gray!30, draw=black] (\xstart,\ystart) rectangle ++(1,0.66);
\node[right] at (\xstart+1.1,\ystart+0.33) {Item $1$: $c_1=a_1=1$};

\filldraw[fill=gray!30, draw=black] (\xstart,\ystart+0.9) rectangle ++(1,1.9);
\node[right] at (\xstart+1.1,\ystart+1.9) {Item $2$: $c_2=2,\hat{a}_2=3, a_2\in [0,4]$};

\filldraw[fill=gray!30, draw=black] (\xstart,\ystart+3) rectangle ++(1,1.33);
\node[right] at (\xstart+1.1,\ystart+3.66) {Item $3$: $c_3=\hat{a}_3=2, a_3 \in [0,4]$};

\draw[thick,gray] (0,2) -- (2,2);
\draw[thick,orange] (2,2) -- (3,2);
\draw[thick,orange] (2,2) -- (2,3);
\draw[thick,green] (2,0) -- (2,2);
\draw[thick,green] (3,0) -- (3,3);
\draw[thick,gray] (0,3) -- (3,3);
\draw[thick,orange] (3,3) -- (4,3);
\draw[thick,orange] (3,3) -- (3,4);

\draw[->,thick] (-0.5,0) -- (4.5,0) node[right] {$a_3$};
\draw[->,thick] (0,-0.5) -- (0,4.5) node[above] {$a_2$};

\foreach \x in {1,...,4} {
    \node[below] at (\x,-0.1) {\x};
    \node[left] at (-0.1,\x) {\x};
}

\node[] at (0,0) {\textcolor{gray}{\textbf{\Large $\times$}}};
\node[] at (1,0) {\textcolor{gray}{\textbf{\Large $\times$}}};
\node[] at (2,0) {\textcolor{green}{\textbf{\Large $\times$}}};
\node[] at (3,0) {\textcolor{green}{\textbf{\Large $\times$}}};
\node[] at (4,0) {\textcolor{green}{\textbf{\Large $\times$}}};

\node[] at (0,1) {\textcolor{gray}{\textbf{\Large $\times$}}};
\node[] at (1,1) {\textcolor{green}{\textbf{\Large $\times$}}};
\node[] at (2,1) {\textcolor{green}{\textbf{\Large $\times$}}};
\node[] at (3,1) {\textcolor{green}{\textbf{\Large $\times$}}};
\node[] at (4,1) {\textcolor{green}{\textbf{\Large $\times$}}};

\node[] at (0,2) {\textcolor{gray}{\textbf{\Large $\times$}}};
\node[] at (1,2) {\textcolor{gray}{\textbf{\Large $\times$}}};
\node[] at (2,2) {\textcolor{orange}{\textbf{\Large $\times$}}};
\node[] at (3,2) {\textcolor{green}{\textbf{\Large $\times$}}};
\node[] at (4,2) {\textcolor{green}{\textbf{\Large $\times$}}};

\node[] at (0,3) {\textcolor{gray}{\textbf{\Large $\times$}}};
\node[] at (1,3) {\textcolor{gray}{\textbf{\Large $\times$}}};
\node[] at (2,3) {\textcolor{black}{\textbf{\Large O}}};
\node[] at (2,3) {\textcolor{gray}{\textbf{\Large $\times$}}};
\node[] at (3,3) {\textcolor{orange}{\textbf{\Large $\times$}}};
\node[] at (4,3) {\textcolor{orange}{\textbf{\Large $\times$}}};

\node[] at (0,4) {\textcolor{gray}{\textbf{\Large $\times$}}};
\node[] at (1,4) {\textcolor{gray}{\textbf{\Large $\times$}}};
\node[] at (2,4) {\textcolor{gray}{\textbf{\Large $\times$}}};
\node[] at (3,4) {\textcolor{orange}{\textbf{\Large $\times$}}};
\node[] at (4,4) {\textcolor{orange}{\textbf{\Large $\times$}}};

\node[rotate=-45] at (0.7,0.7) {};
\node[rotate=-45] at (1.2,1.2) {$i_1,i_2,i_3 (5)$};
\node[rotate=-45] at (1.7,1.7) {\footnotesize$i_2,i_3(4)$};
\node at (2.5,1) {\footnotesize$i_1,i_3(3)$};
\node at (1,2.5) {\footnotesize$i_1,i_2(3)$};
\node at (1.5,3.5) {$i_2(2)$};
\node at (2.5,2.5) {\footnotesize $i_1,i_2\lor i_3 (3)$};
\node[rotate=-45] at (2.3,0.3) {};
\node at (3.5,1.5) {$i_3 (2)$};
\node at (3.5,3.5) {\footnotesize$i_2 \lor i_3 (2)$};

\end{tikzpicture}
    \caption{Counterfactual regions for a \eqref{eq:knapsack_problem} with right-hand side $b=3$ and three item $i_1, i_2,i_3$, the latter two of which have a mutable weight. The favored solution space consists of all all solutions that contain $i_3$, i.e., $\mathcal D_1 = \{ x\in \{0,1\}^3: x_3 = 1 \}$. \textcolor{green}{Green} and \textcolor{orange}{orange} regions represent strong and weak CEs, respectively. Each region is labeled with the items that are part of an optimal solution and the optimum solution value. The black O marks the parameter values $(\hat{A})$ of the present problem.}
    \label{fig:CE_example}
\end{figure}
\end{example}

In the remainder of the paper, we present both results establishing the computational complexity of the problem of constructing weak and strong CEs, as well as algorithms for doing so. 
For reasons discussed later in the paper (see especially Remark~\ref{rem:more_mutable_constraints}), the algorithms presented assume $m = 1$ (we are only allowed to modify a single constraint). The complexity results, however, are general. 

\section{Complexity}

In this section, we show that the problems of deciding whether a counterfactual explanation exists is complete for $\Sigma_2^p$ in both the weak and strong cases. We use the oracle-based definition of the polynomial hierarchy, as formalized in~\citep{AroBar07}, and the polynomial many-to-one notion of reduction introduced by~\cite{karp1972reducibility}. This means that to prove membership in $\Sigma_2^p$, we have to show that whenever a weak/strong CE exists, there is a formal certificate of this fact that can be verified in polynomial time, given an oracle for solving problems in $\NP$. The assumption that we have access to an oracle for solving problems in $\NP$ means, in particular, that we assume that we have the ability to solve the decision version of an ILP in constant time, and hence, the optimization version of ILP in polynomial time. 

\subsection{Weak Counterfactual Explanations} \label{sec:weak_CE}

We first consider the problem of determining whether there exists a weak CE with respect to a present optimization problem of the form~\eqref{eq:knapsack_problem}, which we denote as $\wce$. Formal complexity arguments require careful specification of the input and its (encoded) size. For this problem, the formal input is the triple $(\hat{c}, \hat{A}, \hat{b})$ and the sets $\X$, $\H$, and $\D$. To reason about complexity and input sizes, we must make assumptions about the sets $\X$, $\H$, and $\D$, and we assume here that these sets are the feasible regions of ILPs (the results should hold for any sets for which the problem of determining whether the set is non-empty is in $\NP$). The size of the input is the sum of the sizes of the descriptions of the individual components. 

When a weak CE exists, Equation~\eqref{eq:weak_CE_definition} suggests both a certificate and a method of verifying it that requires solving an ILP. 
It remains to show that there must always exist such a certificate that is polynomial in the size of the input and that this certificate can be verified in polynomial time.    

\begin{theorem}\label{thm:insigma2p}
$\wce$ is in $\Sigma_2^p$.
\end{theorem}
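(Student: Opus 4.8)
The plan is to exhibit, for every yes-instance, a polynomial-size certificate that can be checked in polynomial time by a deterministic procedure equipped with an oracle for $\NP$; this is exactly what membership in $\Sigma_2^p=\NP^{\NP}$ requires. The natural certificate suggested by~\eqref{eq:weak_CE_definition} is a pair consisting of a candidate parameter triple $(c,A,b)\in\H$ together with a witness point $x^\star$ claimed to be an optimal solution of the modified instance that lies in $\D$. Given such a certificate, the verifier would perform three checks: (i) $(c,A,b)\in\H$; (ii) $x^\star\in\D\cap\X$ and $Ax^\star\ge b$, so that $x^\star$ is feasible for the modified present problem; and (iii) $x^\star$ is optimal, i.e.\ $c^\top x^\star=\min_{x\in\X:\,Ax\ge b} c^\top x$. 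Checks (i) and (ii) only require evaluating membership of a single given point in the ILP-representable sets $\H$, $\D$, $\X$ and testing a linear inequality, all of which run in polynomial time.

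The only nontrivial check is optimality in (iii), and this is where the $\NP$ oracle enters. Verifying that no feasible point beats $x^\star$ is a universally quantified (co-$\NP$) statement, $\forall x\,(x\in\X\wedge Ax\ge b\Rightarrow c^\top x\ge c^\top x^\star)$. I would resolve it with a single oracle query: since $c$ is integral, ask the $\NP$ oracle whether there exists $x\in\X$ with $Ax\ge b$ and $c^\top x\le c^\top x^\star-1$. Deciding feasibility of such an ILP is in $\NP$, and with the data $(c,A,b,x^\star)$ all fixed this is a legitimate $\NP$ query; $x^\star$ is optimal precisely when the oracle answers ``no.'' Guessing the certificate (the $\exists$) and then resolving this co-$\NP$ predicate with the oracle realizes the $\exists\forall$ quantifier structure characteristic of $\Sigma_2^p$.

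The \emph{main obstacle} is establishing that a certificate of polynomial size always exists whenever a weak CE does, and two quantities must be bounded. First, the guessed triple $(c,A,b)$ must have encoding size polynomial in the input: since $\H$ is bounded and ILP-representable, every integer point it contains lies inside a bounding box whose coordinates are bounded by a quantity polynomial in the description of $\H$ (via the standard bounds on the sizes of vertices of the defining polyhedron), so each entry of $(c,A,b)$ has polynomial bit-length. Second, the witness $x^\star$ must be a polynomial-size optimal solution; I would invoke the classical result that whenever an ILP attains its optimum it admits an optimal solution of encoding size polynomial in the input, applied to $\min\{c^\top x:\,x\in\X,\,Ax\ge b,\,x\in\D\}$ for the fixed $(c,A,b)$ guaranteed by the definition of a weak CE. Combining these bounds shows the guessed certificate is polynomially bounded. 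The remaining work is bookkeeping: confirming that the input-size conventions fixed above (all of $\X,\H,\D$ given as ILP feasible regions) make each verification step run in time polynomial in the total input size, after which the guess-then-$\NP$-oracle template certifies $\wce\in\Sigma_2^p$.
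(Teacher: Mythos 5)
Your proposal is correct, and at the top level it mirrors the paper's proof: the certificate is a parameter triple together with a witness point, and verification consists of polynomial-time membership checks plus an $\NP$-oracle call deciding whether any feasible point beats the witness. The genuine difference is in how the polynomial size of the certificate is established, and there your route is more elementary and arguably more watertight. The paper forms the set $\mathcal{Q}=\{(x,c,A,b)\in(\X\cap\D)\times\H : Ax\ge b,\ c^\top x=\tilde c\}$, where $\tilde c$ is the optimal value attained by the assumed weak CE, and invokes the cited result that mixed-integer \emph{quadratic} feasibility is in $\NP$ to extract a polynomial-size element of $\mathcal{Q}$. You instead bound the two components separately: boundedness of the ILP-representable set $\H$ forces every integer point of $\H$, hence $(c,A,b)$, to have polynomial bit-length; then, for those fixed parameters, the classical small-optimal-solution theorem for integer \emph{linear} programs applied to $\min\{c^\top x : x\in\X,\ Ax\ge b,\ x\in\D\}$ yields a polynomial-size $x^\star$, and your observation that this restricted problem's optimal value coincides with the unrestricted one (because some unrestricted optimum lies in $\D$) guarantees that $x^\star$ really is optimal for the modified instance. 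What each approach buys: the paper's argument is a one-liner once the MIQP theorem is cited and needs no boundedness discussion of $\H$ beyond what is assumed, while yours avoids quadratic machinery entirely. More importantly, your ordering---first pin down small parameters, then find a small witness that is optimal for those very parameters---produces a certificate that provably passes verification, whereas a polynomial-size point of $\mathcal{Q}$ only satisfies $c^\top x=\tilde c$ for its \emph{own} parameters $(c,A,b)$, with no guarantee that $\tilde c$ is the optimal value for those parameters (and $\tilde c$ itself, defined via a possibly large $x^*$, has no a priori size bound without something like your bound on $\H$). So your decomposition is not merely an alternative; it supplies precisely the step that makes the size argument airtight.
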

\begin{proof}
If a weak CE exists, this means that there must exist a point $x^* \in \Z^n$ and a triple $(c^*,A^*,b^*) \in \Z^n \times \Z^{m \times n} \times \Z^n$ such that $x^* \in \argmin_{x\in \X: A^*x\ge b^*} (c^*)^\top x$ and that $x^* \in \D$. For $\tilde c:=(c^*)^\top x^*$ we must then have that $(x^*, c^*, A^*, b^*) \in \mathcal{Q}$ where
\[
\mathcal{Q} := \{(x,c,A,b) \in \X\cap \D \times \mathcal H : Ax\ge b, c^\top x = \tilde c\}
\]
Since mixed-integer quadratic programming feasibility is in $\NP$ (see Theorem 1 in~\cite{pia2017mixed}), there must exist $(\bar{x}, \bar{c}, \bar{A}, \bar{b}) \in \mathcal{Q}$ that has size polynomial in the original input data. We claim that $(\bar{x}, \bar{c}, \bar{A}, \bar{b}) \in \mathcal{Q}$ is then a certificate that can be checked in polynomial time, given an oracle for problems in $\NP$.

The verification requires checking first whether $\bar{x} \in \X\cap \D$, which can be done in polynomial time under our assumptions. Second, we need to check that $\bar{x} \in \argmin_{x\in \X: \bar{A}x\ge \bar{b}} \bar{c}^\top x$. This can also be done in polynomial time using the available $\NP$ oracle. 
\end{proof}

\noindent The following result then shows that $\wce$ is $\Sigma_2^p$-complete.

\begin{theorem}\label{thm:weak_CE_sigma-hard}
$\wce$ is complete for $\Sigma_2^p$
and cannot be approximated with a constant approximation factor in polynomial time, unless the polynomial hierarchy collapses, 
even when $\H = \{\hat c\} \times \H_a \times \{ \hat b\}$ (for given $\hat{c} \in \Z^n$ and $\hat{b} \in \Z^m$) and $\X=\{ 0,1\}^n$. 
\end{theorem}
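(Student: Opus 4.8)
The plan is to establish $\Sigma_2^p$-completeness by reducing from a canonical $\Sigma_2^p$-complete problem. Since Theorem~\ref{thm:insigma2p} already gives membership, the core task is hardness. A natural source problem is a $\exists\forall$-quantified satisfiability variant, such as $\exists x \forall y\, \phi(x,y)$ for a suitable Boolean formula $\phi$, or an optimization-flavored $\Sigma_2^p$-complete problem. The key conceptual match is that the weak CE condition hides a built-in inner optimization: choosing $(c,A,b)\in\H$ plays the role of the existential (outer) quantifier, and the requirement that \emph{some} optimizer of the resulting ILP lands in $\D$ implicitly quantifies over the optimal face. The plan is to encode the $\exists$-variables into the mutable constraint parameters $\H_a$ (keeping $\hat c$ and $\hat b$ fixed, and $\X = \{0,1\}^n$ to respect the stated restriction), and to encode the $\forall$-quantifier into the nondeterminism of which $0/1$ solution attains the optimum of $\hat c^\top x$ subject to the single mutable constraint $a^\top x \ge \hat b$.

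First I would fix the target problem and its standard form, then design gadget variables: one block of binary variables whose values are forced (via the single constraint together with the objective $\hat c$) to encode an assignment to the $\exists$-variables, and a second block representing the $\forall$-variables, which the inner optimization is free to set in any way consistent with optimality. The single constraint $a^\top x \ge \hat b$ is the only lever, so the crux of the construction is to make the optimal objective value depend on the chosen $a$ in such a way that (i) for a ``good'' choice of $a$ every optimal $x$ satisfies $\phi$ (so $x\in\D$ can be arranged), and (ii) for a ``bad'' choice no optimal $x$ falls in $\D$. I would set $\D$ to be the ILP-feasible set encoding ``$x$ represents a satisfying assignment of $\phi$'' so that the weak condition~\eqref{eq:weak_CE_definition} becomes exactly ``there exists a parameter choice making some satisfying assignment optimal,'' matching the $\exists\forall$ structure.

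The hardest part, I expect, will be faithfully simulating the alternation with only a \emph{single} mutable constraint and a \emph{fixed} objective: with so few degrees of freedom one must force the inner ILP's optimal face to coincide exactly with the set of completions the universal quantifier ranges over, neither collapsing it (which would weaken to $\exists$ alone, i.e.\ $\NP$) nor enlarging it (which would break the reduction). The standard trick is to choose $\hat c$ so that all feasible points encoding valid assignments are tied at the same objective value, making the optimal set precisely the relevant combinatorial family, and then to use the single constraint's coefficients $a$ to switch feasibility of those points on and off. I would verify that the reduction is polynomial-time computable and that $\H$ is a bounded discrete box as required.

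Finally, for the inapproximability claim I would make the reduction a \emph{gap} reduction: arrange the distance $\delta$ so that a ``yes'' instance of the $\Sigma_2^p$ problem corresponds to a CE of cost $0$ (or some fixed small value) while a ``no'' instance forces every CE to have cost at least some value that can be scaled arbitrarily large. Because $\delta$ only measures the change in $a$, I would embed a tunable scaling factor into the parameter space so that the cost of the cheapest feasible CE in the ``no'' case exceeds any constant multiple of the ``yes''-case cost; then any constant-factor polynomial-time approximation would decide the underlying $\Sigma_2^p$-complete problem, collapsing the hierarchy. The subtlety here is ensuring the ``yes'' case admits a genuinely cheap (ideally zero-cost, i.e.\ $\hat A$ itself works, or arbitrarily close) CE while preserving the all-or-nothing gap, which may require a separate base gadget guaranteeing the present parameters are ``just barely'' infeasible for $\D$.
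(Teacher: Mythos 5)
Your proposal correctly identifies the quantifier structure (the choice of $a\in\H_a$ plays the existential role, while the optimal face of the inner ILP plays the universal role) and correctly delegates membership to Theorem~\ref{thm:insigma2p}, but the hardness argument has a genuine gap: the gadget you yourself flag as ``the hardest part'' --- forcing, with a \emph{single} mutable constraint, a \emph{fixed} objective $\hat c$, and $\X=\{0,1\}^n$, the optimal face of the inner problem to coincide exactly with the set of completions the universal quantifier ranges over --- is never constructed, and it is precisely the entire technical content of the theorem. Encoding $\exists\forall$-SAT directly under these restrictions amounts to proving from scratch that knapsack interdiction is $\Sigma_2^p$-hard, which is a nontrivial standalone result. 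The paper avoids this by not reducing from QSAT at all: it reduces from the unitary bilevel interdiction knapsack problem (UBIK), whose $\Sigma_2^p$-completeness is taken from the literature and whose structure already matches the restricted CE setting. There, the leader's deletion of item $i$ is simulated by the mutable coefficient $a_i=-\bar a_i-\delta_i\bar b$ with $\sum_{i=1}^n \delta_i\le k$ (interdicted items become unusable in any feasible solution), the favored set $\D=\{x\in\{0,1\}^n: -\bar c^\top x\ge -K+1\}$ encodes that the follower's value stays below the target $K$, and $\hat c,\hat b$ remain fixed, so $\H=\{\hat c\}\times\H_a\times\{\hat b\}$ exactly as the statement requires. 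Choosing a source problem that structurally matches the target is what makes the proof short; your plan leaves that matching problem open.

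On inapproximability, your gap-reduction plan (embedding a tunable scaling factor so that no-instances force arbitrarily expensive CEs) is workable in principle but unnecessary, and your concern about arranging ``zero-cost'' CEs for yes-instances points at the wrong difficulty. The paper's argument is simpler: the reduction shows that deciding whether \emph{any} weak CE exists is already $\Sigma_2^p$-hard, i.e.\ the feasibility question itself is hard. Any polynomial-time constant-factor approximation algorithm must in particular distinguish feasible instances from infeasible ones, hence would decide UBIK and collapse the polynomial hierarchy. No cost gap needs to be engineered at all when feasibility is the hard part.
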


\begin{proof}
We reduce the unitary bilevel interdiction knapsack problem (UBIK) to $\wce$. UBIK is defined as follows:  Let $(\bar c,\bar a,\bar b)$ be an instance of the binary knapsack problem with $n$ items
\begin{align*}
\max \ & \bar c^\top x \\
s.t. \quad & \bar a ^\top x \le \bar b,\\
& x\in\{ 0,1\}^n,
\end{align*}
where all entries of $(\bar c,\bar a,\bar b)$ are positive and integer.
Consider a follower, who desires a solution with objective value at least $K$, and a leader who can delete up to $k$ items from the knapsack. Then, the UBIK consists of deciding if there exists a subset of items $I \subseteq [n]$ with $|I| \le k$ such that for any feasible knapsack solution using only the items from $[n] \setminus I$, the objective value is strictly less than a given target $K$. It was shown in \cite{tomasaz2024} that UBIK is $\Sigma_2^p$-complete. The UBIK problem is equivalent to an instance of $\wce$ where the present problem is
\begin{align*}
\min \ & -\bar c^\top x \\
s.t. \quad & -\bar a ^\top x \ge -\bar b,\\
& x\in\{ 0,1\}^n,
\end{align*}

\noindent which has been put into the required form by negating the input data, and the required sets are defined as
\begin{itemize}
    \item $\X=\{ 0,1\}^n$,
    \item $\D = \{x \in \{ 0,1\}^n: -\bar{c}^\top x \ge -K + 1 \}$, and
    \item $\H = \{\bar c\} \times \H_a \times \{ \bar b\}$, where $\H_a = \{a \in \mathbb{R}^n: a_i = -\bar{a_i} - \delta_i\bar b, \ i\in [n], \ \sum_{i=1}^{n} \delta_i \le k, \ \delta\in \{ 0,1\}^n\}$.
\end{itemize}

The definition of $\D$ encodes the condition that the optimal values is strictly less than $K$, while the definition of $\H$ encodes that up to $k$ items may be forced to have value zero in the solution (interdicted). 

Now let $a \,\in\, \H_a$ be a weak CE. By construction, 
the maximum attainable objective of the knapsack problem with weights $a$ is $K-1$. Furthermore, at most $k$ items were interdicted (since $a\in \mathcal H_a$) to achieve that,  i.e., a YES instance for UBIK. Similarly, if the weak CE problem is infeasible this implies that for every set of at most size $k$ of interdicted items there exists a KP solution that has objective value at least $K$. Hence, we showed that UBIK is a yes-instance iff the constructed weak CE problem is feasible.

Since UBIK is $\Sigma_2^p$-complete, the reduction shows that $\wce$ is $\Sigma_2^p$-hard. 
Together with Theorem~\ref{thm:insigma2p}, this implies that $\wce$ is $\Sigma_2^p$-complete, even under strong restrictions. 
Notably, the CE only requires solving a feasibility problem, which implies that no efficient approximation algorithms exist unless the polynomial hierarchy collapses. 
\end{proof}

\begin{remark}
For the \eqref{eq:weak_CE_problem} where only the right hand side $b$ is mutable, it suffices to enumerate all relevant right hand side values of $b$. Assuming that $\H_b = \{ b\in \mathbb Z: \underline{b} \le b \le \bar b\}$, we then have to solve the problem
$$\begin{aligned}
    \min_{b \in \mathbb{Z},\,\underline{b} \le b \le \bar{b}} \ & \delta\left((\hat c, \hat A, b), (\hat c, \hat A, \hat b)\right) \\
    s.t.\quad & x \in \argmin_{x\in \X: \hat A x\ge b} \hat c^\top x, \\
    & x \in \D .
\end{aligned}$$
We can now iterate through all values of $b$, sorted by distance regarding $\delta\left((\hat c, \hat A, b), (\hat c, \hat A, \hat b)\right)$, and for each, we can check if the Condition \eqref{eq:weak_CE_definition} for $(\hat c, \hat A, b)$ is true. The latter condition can be verified by checking 
$$
\min_{x\in \X: \hat A x\ge b} \hat c^\top x = \min_{x\in \X\cap \D: \hat A x\ge b} \hat c^\top x,
$$ 
which involves solving two (potentially) $\NP$-hard problems.
\end{remark}

This remark leads to a solution algorithm when the present problem is a knapsack problem. If at the same time the favored solution space is given by a fixed number of knapsack constraints, then both problems we have to solve for every value of $b$ can be solved in pseudopolynomial time, which leads to a pseudopolynomial algorithm. The exact computational complexity discussion follows.

\begin{theorem}
    Let the underlying problem be a knapsack problem, i.e., $\X=\{ 0,1\}^n$, $\hat A=\hat a\in \mathbb Z_+^n$ and $$\D = \left\{x\in\{ 0,1\}^n:  q_i^\top x \ge p_i, \ i=1,\ldots ,T\right\}$$ with $q_i\in\mathbb Z_+^n$, $p_i\in\mathbb Z$ for all $i\in [T]$. Furthermore, assume $\H_b = \{ b\in \mathbb Z: \underline{b} \le b \le \bar b\}$, then \eqref{eq:weak_CE_problem} can be solved in time $\mathcal O\left( n \bar b^2 \prod_{i=1}^{T} (p_i+1)\right)$.
\end{theorem}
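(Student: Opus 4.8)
The plan is to realize the enumeration scheme from the preceding Remark and to show that each of its two per-value subproblems is solvable by a pseudopolynomial dynamic program. Recall that, for a fixed right-hand side $b$, the weak CE condition \eqref{eq:weak_CE_definition} holds exactly when
\[
\mathrm{OPT}_1(b) \;:=\; \min_{x\in\{0,1\}^n:\ \hat a^\top x\ge b}\hat c^\top x \;=\; \min_{x\in\{0,1\}^n\cap\D:\ \hat a^\top x\ge b}\hat c^\top x \;=:\; \mathrm{OPT}_2(b),
\]
with both minima finite. Indeed, since $\X\cap\D\subseteq\X$ we always have $\mathrm{OPT}_2(b)\ge\mathrm{OPT}_1(b)$, and equality means precisely that some $\D$-feasible point attains the unrestricted optimum, i.e.\ lies in $\argmin_{x\in\X:\hat a^\top x\ge b}\hat c^\top x\cap\D$. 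Hence the algorithm iterates over all $b\in\{\underline b,\ldots,\bar b\}$, computes $\mathrm{OPT}_1(b)$ and $\mathrm{OPT}_2(b)$, records $b$ as a weak CE whenever the two are equal and finite, and finally returns the recorded $b$ minimizing $\delta\big((\hat c,\hat a,b),(\hat c,\hat a,\hat b)\big)$, thereby solving \eqref{eq:weak_CE_problem}.

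It then remains to compute both quantities in pseudopolynomial time. For $\mathrm{OPT}_1(b)$ I would run the standard covering-knapsack recursion: processing items one at a time, keep for each weight level $w\in\{0,\ldots,b\}$ the least objective value among partial selections whose accumulated $\hat a$-weight, capped at $b$, equals $w$; selections sitting at the capped level $w=b$ are exactly those with $\hat a^\top x\ge b$, so $\mathrm{OPT}_1(b)$ is read off there. This table has $O(n\bar b)$ cells with $O(1)$ transitions. For $\mathrm{OPT}_2(b)$ I would enlarge the state to additionally record, for each favored-set constraint $i\in[T]$, the accumulated value $q_i^\top x$ capped at $p_i$; a partial selection is $\D$-feasible precisely when all $T$ of these coordinates have reached their caps. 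The enlarged table has $O\big(n\,\bar b\prod_{i=1}^{T}(p_i+1)\big)$ cells, once more with constant-time transitions.

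Combining the pieces, the second dynamic program dominates the first, so each of the $O(\bar b)$ candidate values of $b$ is handled in $O\big(n\,\bar b\prod_{i=1}^{T}(p_i+1)\big)$ time, while evaluating $\delta$ over the recorded candidates is negligible. This yields the claimed total of $O\big(n\,\bar b^2\prod_{i=1}^{T}(p_i+1)\big)$. (A single DP sweep with suffix-minima over the weight coordinate would even shave a factor of $\bar b$, but the stated bound already follows from the naive per-$b$ accounting.)

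The two steps that I expect to require genuine care are the correctness of the equality test and the soundness of capping. For the former, one must treat an infeasible right-hand side---where $\{x\in\X:\hat a^\top x\ge b\}=\emptyset$ and both programs report $+\infty$---as a non-CE rather than as a spurious match of two infinite values, which is why finiteness is part of the condition. For the latter, collapsing all states past a threshold into a single capped state is lossless exactly because every relevant constraint is of covering ($\ge$) type with coefficients in $\mathbb Z_+$: feasibility then depends only on whether the accumulated left-hand side has crossed its threshold, so merging the post-threshold states while retaining the minimum objective is valid regardless of the signs of the entries of $\hat c$.
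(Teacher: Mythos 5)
Your proposal is correct and follows essentially the same route as the paper: enumerate the candidate right-hand sides $b$, test the weak-CE condition via equality of the unrestricted minimum and the $\D$-restricted minimum (both finite), and solve each per-$b$ subproblem with the pseudopolynomial multi-dimensional covering-knapsack dynamic program --- the paper simply cites the classical DP result where you spell the recursion out, and your discussion of capping and of infeasible right-hand sides supplies details the paper leaves implicit. The one point you gloss over is that $\underline{b}$ may be negative, in which case iterating over all of $\{\underline{b},\ldots,\bar b\}$ is not $O(\bar b)$ iterations as your accounting assumes; the paper's proof opens by discarding all negative values of $b$ (for $b\le 0$ the constraint $\hat a^\top x\ge b$ is vacuous since $\hat a\in\mathbb Z_+^n$ and $x\ge 0$, so every such $b$ is dominated by $b=0$), which is the one-line fix your argument needs to match the claimed bound.
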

\begin{proof}
First, note that if $\underline b<0$, then we can remove all negative values for $b$. Hence, in the worst case we have to check $\bar b$ times whether $b$ is a weak CE. For each value of $b$, we have to solve two multi-dimensional knapsack problems:
\[
\min_{x\in\{ 0,1\}^n: \hat a^\top x \ge b} \hat c^\top x \quad \text{ and } \quad 
\min_{\substack{x\in\{ 0,1\}^n: \hat a^\top x\ge b, \\ q_i^\top x \ge p_i \ i=1,\ldots ,T}} \hat c^\top x .
\]
The second problem can be solved by the classical pseudopolynomial algorithm for multi-dimensional knapsack problems in time $\mathcal O\left( n b \prod_{i=1}^{T} (p_i+1) \right)$ (see \citep{freville2004multidimensional}) and this dominates the runtime of solving the classical knapsack problem on the left. Since $b\le \bar b$ and we have to solve the latter problems $\bar b$ times, the desired result follows.
\end{proof}

\subsection{Strong Counterfactual Explanations}

In this section, we show complexity results and solution algorithms for the problem of determining whether there exists a strong CE, which we refer to as $\sce$. The basic setup for $\sce$ is the same as for the case of $\wce$, and the problem has the same form of input. The difference is in the verification problem, which is more involved in the case of a strong CE. Rather than needing to show that $\exists x \in \D, x\in\argmin_{x\in \X: A x\ge b} c^\top x$, we need to show that $\forall x \in \D, x \in \argmin_{x\in \X: A x\ge b} c^\top x$.

We first provide the following proposition showing that for a given point $(c,a,b)\in \H$, we can check if the point is a strong CE by solving three $\NP$-hard optimization problems.
\begin{proposition}
A parameter vector $(c,A,b)\in\H$ is a strong counterfactual explanation, if and only if
\begin{equation}\label{eq:check_strong_CE}
\min_{x\in \X: A x\ge b} c^\top x = \min_{\substack{x\in \X\cap \D: A x\ge b}} c^\top x \leq \min_{\substack{x\in \X\setminus D: \\ A x\ge b}} c^\top x - 1.
\end{equation}
\end{proposition}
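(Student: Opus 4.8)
The plan is to prove the biconditional characterizing a strong CE by translating the set-containment condition $\argmin_{x\in\X:Ax\ge b} c^\top x \subseteq \D$ into statements about optimal objective values, exploiting that $\X$ is a discrete (integer) feasible set so that any strict improvement in objective value is bounded below by $1$. First I would introduce the three relevant optimal values
\begin{align*}
  v &:= \min_{x\in\X:\,Ax\ge b} c^\top x, \\
  v_\D &:= \min_{x\in\X\cap\D:\,Ax\ge b} c^\top x, \\
  v_{\bar\D} &:= \min_{x\in\X:\,Ax\ge b,\,x\notin\D} c^\top x,
\end{align*}
with the convention that a minimum over an empty feasible set equals $+\infty$. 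The claim to prove is then exactly $v = v_\D \le v_{\bar\D}-1$ iff $(c,A,b)$ is a strong CE. Note $v = \min(v_\D, v_{\bar\D})$ since the feasible region of the present problem partitions into the part inside $\D$ and the part outside $\D$.

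For the forward direction I would assume $(c,A,b)$ is a strong CE, i.e. every minimizer lies in $\D$. I first argue the argmin is nonempty: since $\D\cap\X\neq\emptyset$ is assumed and $\H$ contains feasible parameters, the relevant feasible region is nonempty and, as an integer program with finite optimum over a bounded set, attains its minimum, so $v$ is finite and some minimizer exists. Because all minimizers lie in $\D$, we have $v=v_\D$ (the optimum is achieved by a point in $\D$). It remains to show $v_\D \le v_{\bar\D}-1$, equivalently $v < v_{\bar\D}$. If no feasible point lies outside $\D$ then $v_{\bar\D}=+\infty$ and the inequality holds trivially. Otherwise, take any feasible $x\notin\D$; by the strong-CE assumption $x$ is not a minimizer, so $c^\top x > v = v_\D$. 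Since $c$ is integer and $x$ is an integer point, $c^\top x$ is an integer strictly greater than the integer $v_\D$, hence $c^\top x \ge v_\D + 1$; minimizing over all such $x$ gives $v_{\bar\D}\ge v_\D+1$, which is the desired inequality. This integrality gap is the crux that lets me pass from a strict inequality to a $+1$ bound.

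For the reverse direction I would assume $v=v_\D\le v_{\bar\D}-1$ and show every minimizer lies in $\D$. Let $x^\star$ be any minimizer, so $c^\top x^\star = v = v_\D$. Suppose for contradiction $x^\star\notin\D$; then $x^\star$ is feasible and outside $\D$, so $c^\top x^\star \ge v_{\bar\D} \ge v_\D + 1 > v_\D = c^\top x^\star$, a contradiction. Hence $x^\star\in\D$, and since $x^\star$ was an arbitrary minimizer the argmin is contained in $\D$, i.e. $(c,A,b)$ is a strong CE.

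I expect the main subtlety to be the bookkeeping around empty feasible regions and the $+\infty$ convention, together with making the integrality argument airtight: the step $c^\top x > v_\D \implies c^\top x \ge v_\D + 1$ relies essentially on $c\in\Z^n$ and $x\in\X\subseteq\Z_+^n$, which is exactly why the problem is posed over integer data. I would state this integrality assumption explicitly where it is used. The rest is routine once the three values and the partition $v=\min(v_\D,v_{\bar\D})$ are set up, and membership of the verification in the claimed complexity class follows because each of the three minima is an ILP solvable with the assumed $\NP$ oracle.
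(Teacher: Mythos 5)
Your proof is correct and takes essentially the same route as the paper's, which is only a two-sentence sketch asserting that the equality means \emph{some} point of $\D$ is optimal and the inequality (via integrality of $c$ and $\X$) means every feasible point outside $\D$ is strictly worse; you simply spell out the two directions and the $+1$ integrality gap explicitly. The one soft spot is your nonemptiness claim: $\D\cap\X\neq\emptyset$ and $(\hat c,\hat A,\hat b)\in\H$ do not guarantee $\{x\in\X : Ax\ge b\}\neq\emptyset$ for an arbitrary $(c,A,b)\in\H$ (parameters in $\H$ can render the problem infeasible, in which case the strong-CE condition holds vacuously while the stated equation degenerates), but this edge case is glossed over by the paper as well.
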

\begin{proof}
The first equation ensures that \emph{some} solution from $\D$ is indeed optimal for the parameters $(c,a,b)$, just as in the weak CE case. The second inequality ensures that \emph{all} optimal solutions are contained in $\D$. The result follows directly from the definition of strong CE.
\end{proof}
Note that the condition $x\notin \D$ used in the last of the three problems is not always possible to formulate compactly. 
If $\D$ is defined by a single constraint,  i.e., $\D_\ge = \{ x\in \X: \sum_{i\in\mathcal I} \alpha_i x_i \le \beta\}$, then the condition $x\notin \D$ is equivalent to $x\in \{ x\in \X: \sum_{i\in\mathcal I} \alpha_i x_i > \beta\}$. However, if the description of $\D$ is given by multiple constraints, a big-M formulation may need to be used to model $x\notin \D$. In the rest of the section, we assume that we have a polynomial size description for $x\notin \D$. Under this condition,  we show that $\sce$ is in $\Sigma_2^p$ by an argument similar to that for the weak CE, but with a different certificate. In the case that a strong CE exists, there are points $x_1^*, x_2^* \in \Z^n$ and a triple $(c^*,A^*,b^*) \in \Z^n \times \Z^{m \times n} \times \Z^n$ such that $x_1^* \in \argmin_{x\in \X: A^*x\ge b^*} (c^*)^\top x$, $x_1^* \in \D$, and $x_2^* \in \argmin_{x\in \X, x \not\in \D: A^*x\ge b^*} (c^*)^\top x$, with $(c^*)^\top x_1^* \leq (c^*)^\top x_2^* - 1$. As in $\wce$, we must show that a certificate of polynomial size, consisting of these elements, must exist and can be verified in polynomial time, given an oracle for problems in $\NP$. 
\begin{theorem}\label{thm:strong_insigma2p}
$\sce$ is in $\Sigma_2^p$.
\end{theorem}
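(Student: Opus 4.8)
The plan is to mirror the membership argument for $\wce$ (Theorem~\ref{thm:insigma2p}), but with a certificate rich enough to witness the stronger condition~\eqref{eq:check_strong_CE}. As sketched in the paragraph preceding the theorem, the natural certificate is the tuple $(x_1^*, x_2^*, c^*, A^*, b^*)$, where $(c^*,A^*,b^*)\in\H$ is the candidate strong CE, $x_1^*$ witnesses that some optimal solution lies in $\D$, and $x_2^*$ witnesses the value gap to the best feasible solution \emph{outside} $\D$. The first step is to argue such a tuple exists and can be taken of polynomial size. I would encode the existence of $(x_1^*, x_2^*, c^*, A^*, b^*)$ satisfying the bilinear constraints $A^* x_1^* \ge b^*$, $A^* x_2^* \ge b^*$, $x_1^*\in\X\cap\D$, $x_2^*\in\X\setminus\D$, together with the value relations $(c^*)^\top x_1^* = \tilde c$ and $(c^*)^\top x_2^* = \tilde c'$ (for the realized optima $\tilde c, \tilde c'$) and $\tilde c \le \tilde c' - 1$, as a single mixed-integer quadratic feasibility instance, exactly as in the $\wce$ proof. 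By Theorem~1 of~\cite{pia2017mixed}, if this set is nonempty it contains a point of size polynomial in the input, which gives the polynomial-size certificate. Here I rely on the standing assumption (stated just before the theorem) that $x\notin\D$ admits a polynomial-size description, so that the membership $x_2^*\in\X\setminus\D$ is itself expressible within the MIQP encoding.

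The second step is verification in polynomial time with an $\NP$ oracle. Checking $x_1^*\in\X\cap\D$, $x_2^*\in\X\setminus\D$, and the linear/bilinear feasibility relations $A^*x_1^*\ge b^*$, $A^*x_2^*\ge b^*$, $(c^*,A^*,b^*)\in\H$ is direct once the values $\bar x_1,\bar x_2,\bar c,\bar A,\bar b$ are substituted, since then the bilinear terms become concrete integers. The remaining obligations are the three optimization identities in~\eqref{eq:check_strong_CE}: I must confirm that $\bar c^\top \bar x_1$ equals both $\min_{x\in\X:\bar Ax\ge\bar b}\bar c^\top x$ and $\min_{x\in\X\cap\D:\bar Ax\ge\bar b}\bar c^\top x$, and that $\bar c^\top \bar x_2 = \min_{x\in\X:\bar Ax\ge\bar b,\,x\notin\D}\bar c^\top x$ with $\bar c^\top\bar x_1 \le \bar c^\top\bar x_2 - 1$. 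Each of these three minima is the optimal value of an ILP over a set whose nonemptiness and optimization are in $\NP$ under our assumptions on $\X$, $\D$, and the polynomial description of $\X\setminus\D$; hence each can be evaluated with polynomially many calls to the $\NP$ oracle (decision queries of the form ``is there a feasible $x$ with objective $\le t$?'' resolved by binary search over the polynomially bounded objective range). Comparing the returned values against $\bar c^\top\bar x_1$ and $\bar c^\top \bar x_2$ completes the check.

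The main obstacle I anticipate is making the third minimum, over $x\in\X$ with $x\notin\D$, fit cleanly into the $\NP$-oracle framework: unlike $\X\cap\D$, the set $\X\setminus\D$ need not be the feasible region of a compact ILP, and the theorem is only claimed under the assumption that $x\notin\D$ has a polynomial-size description. I would foreground this assumption explicitly and note, as the surrounding text already does, that a single-constraint $\D$ yields $x\notin\D$ directly while a multi-constraint $\D$ requires a big-$M$ formulation; granting this, the set $\{x\in\X:\bar Ax\ge\bar b,\,x\notin\D\}$ is again an ILP feasible region and its optimization is in $\NP$. A secondary point worth stating carefully is the consistency between the guessed values $\tilde c,\tilde c'$ in the certificate and the verified optima, so that the ``$=$'' relations in~\eqref{eq:check_strong_CE} are genuinely certified rather than merely the ``$\le$'' directions; this is handled by verifying $\bar x_1,\bar x_2$ are feasible (giving the easy inequalities) and invoking the oracle to rule out any strictly better solution (giving the reverse inequalities). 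With both directions in hand for all three minima, membership of $\sce$ in $\Sigma_2^p$ follows.
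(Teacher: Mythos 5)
Your proposal matches the paper's proof essentially step for step: the same certificate $(x_1^*,x_2^*,c^*,A^*,b^*)$, the same appeal to MIQP feasibility being in $\NP$ (Theorem~1 of~\cite{pia2017mixed}) to bound the certificate size, the same reliance on the standing assumption that $x\notin\D$ admits a polynomial-size description, and the same oracle-based verification of the membership and argmin conditions. The extra care you take about certifying both directions of the equalities in~\eqref{eq:check_strong_CE} is a fair elaboration of what the paper's argmin checks already entail, not a different route.
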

\begin{proof}
If a strong CE exists, this means that there must exist $x_1^*, x_2^* \in \Z^n$ and a triple $(c^*,A^*,b^*) \in \Z^n \times \Z^{m \times n} \times \Z^n$ such that $x_1^* \in \argmin_{x\in \X: A^*x\ge b^*} (c^*)^\top x$, $x_1^* \in \D$, and $x_2^* \in \argmin_{x\in \X, x \not\in \D: A^*x\ge b^*} (c^*)^\top x$, with $(c^*)^\top x_1^* \leq (c^*)^\top x_2^* - 1$. For $\tilde c_1 := (c^*)^\top x_1^*$ we must then have
\begin{align*}
& (x_1^*, x_2^*, c^*, A^*, b^*) \in \mathcal{R} := \\
& \{(x_1,x_2,c,A,b) \in (\X \cap \D) \times \X \times \mathcal H:\\ & Ax_1\ge b, Ax_2\ge b, c^\top x_1 = \tilde c_1, \\&c^\top x_1 \leq c^\top x_2 - 1\}
\end{align*}
Since MIQP feasibility is in $\NP$, there must exist $(\bar{x}_1, \bar{x}_2, \bar{c}, \bar{A}, \bar{b}) \in \mathcal{R}$ that has size polynomial in the original input data (see Theorem 1 in~\citep{pia2017mixed}). We claim that $(\bar{x}_1, \bar{x}_2, \bar{c}, \bar{A}, \bar{b}) \in \mathcal{R}$ is then a certificate that can be checked in polynomial time, given an oracle for problems in $\NP$.

The verification requires checking first whether $\bar{x}_1 \in \X\cap\D$ and $\bar x_2\in\X$, which can be done in polynomial time under our assumptions. Second, we need to check both that $\bar{x}_1 \in \argmin_{x\in \X: \bar{A}x\ge \bar{b}} \bar{c}^\top x$ and whether $x_2^* \in \argmin_{x\in \X, x \not\in \D: \bar Ax\ge \bar b} \bar c^\top x$. Both of these checks can also be done in polynomial time using the available $\NP$ oracle. 
\end{proof}

\begin{theorem}\label{thm:strong_CE_sigma-hard}
$\sce$ is complete for $\Sigma_2^p$, even for $\H = \{\hat c\} \times \H_a \times \{ \hat b\}$, where $\H_a$ is a finite set, and $\X=\{ 0,1\}^n$. 
\end{theorem}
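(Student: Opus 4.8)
The plan is to split the statement into membership and hardness and to reuse the weak-CE machinery wherever possible. Membership in $\Sigma_2^p$ is already supplied by Theorem~\ref{thm:strong_insigma2p}, so the entire task reduces to establishing $\Sigma_2^p$-hardness under the stated restrictions. For the hardness direction I would reduce once more from the unitary bilevel interdiction knapsack problem (UBIK), using \emph{verbatim} the same present problem and the same sets $\X = \{0,1\}^n$, $\D = \{x \in \{0,1\}^n : -\bar c^\top x \ge -K+1\}$, and $\H = \{\bar c\} \times \H_a \times \{\bar b\}$ that appear in the proof of Theorem~\ref{thm:weak_CE_sigma-hard}. I would first note that the set $\H_a$ there is already finite, since the interdiction vector $\delta$ ranges over $\{0,1\}^n$ subject to $\sum_i \delta_i \le k$, so the restriction ``$\H_a$ a finite set'' in the statement is automatically satisfied and no new construction is needed on that account.

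The key observation that lets the same construction serve the strong case is that $\D$ is a \emph{level set} of the objective: membership $x \in \D$ is equivalent to the single scalar condition $\bar c^\top x \le K-1$ and therefore depends only on the objective value $c^\top x = -\bar c^\top x$. Consequently, for any fixed $a \in \H_a$ every element of $\argmin_{x \in \X : a^\top x \ge -\bar b} (-\bar c^\top x)$ attains the same objective value, and hence these elements are simultaneously either all inside or all outside $\D$. This dichotomy forces the weak and strong CE conditions to coincide on every instance produced by the reduction: ``some optimal solution lies in $\D$'' holds if and only if ``all optimal solutions lie in $\D$''. The correspondence already proved for the weak case---that $a$ is a CE precisely when the interdiction it encodes drives the maximum attainable knapsack value below $K$---therefore carries over unchanged to $\sce$, so a strong CE exists if and only if the UBIK instance is a YES-instance.

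The one point requiring genuine care, and which I expect to be the main (if modest) obstacle, is the vacuous-truth behaviour of the strong condition when the modified feasible region is empty: if $\argmin$ were empty, then $\argmin \subseteq \D$ would hold trivially and could manufacture spurious strong CEs, breaking the dichotomy above. I would dispose of this by observing that $x = 0$ is feasible for every $a \in \H_a$, because interdiction only forces selected variables to zero and $0$ trivially satisfies $a^\top 0 = 0 \ge -\bar b$ (as $\bar b > 0$); hence the feasible region, and therefore the $\argmin$, is always nonempty and the level-set argument applies with no vacuous case. The same point $x=0$ witnesses $\D \cap \X \ne \emptyset$, since $\bar c^\top 0 = 0 \le K-1$, so the constructed instance is well-formed. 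With these checks in place the reduction is polynomial and UBIK is a YES-instance iff a strong CE exists; combined with Theorem~\ref{thm:strong_insigma2p}, this yields $\Sigma_2^p$-completeness under the stated restrictions.
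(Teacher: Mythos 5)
Your proposal is correct and takes essentially the same route as the paper: the paper's own proof of this theorem is literally a one-line reuse of the UBIK reduction from Theorem~\ref{thm:weak_CE_sigma-hard}, reinterpreted as an instance of $\sce$. Your additional observations---that $\D$ is a level set of the objective, so all optimal solutions lie in or out of $\D$ together, and that $x=0$ keeps the modified feasible region (hence the $\argmin$) nonempty, ruling out vacuous strong CEs---are exactly the justification the paper leaves implicit, so your write-up is, if anything, more complete than the paper's.
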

\begin{proof}
We use exactly the same reduction as in the proof of Theorem~\ref{thm:weak_CE_sigma-hard} with the same inputs, but we simply interpret those inputs as a description of an instance of $\sce$ instead of $\wce$. 
\end{proof}

\section{Algorithms}

Building on our earlier observations regarding the structure of CE problems, we now present algorithms designed to compute both weak and strong CEs.

\subsection{Weak Counterfactual Explanations}

In the following, we develop three algorithms corresponding to the three cases of the mutable parameter space listed in Section \ref{sec:problem_definition}.

\paragraph{Only objective function parameters are mutable.}
If $\hat a$ and $\hat b$ are fixed, the mutable parameter space is denoted by $\H_c \subseteq \mathbb  R^n$. In this case, we can reformulate \eqref{eq:weak_CE_problem} as
\begin{subequations}\label{eq:weak_CE_objective_mutable_reformulation}
\begin{align}
     \min_{x,c} \ & \delta\left( (c,\hat{a},\hat{b}), (\hat c, \hat a, \hat b)\right) \\
            s.t.\quad & c^\top x \le c^\top y \quad \forall y \in \X:  \hat a^\top y\ge \hat b, \label{eq:weak_CE_only_objective1}\\
            & \hat a^\top x\ge \hat b, \label{eq:weak_CE_only_objective2}\\
            & x \in \D\cap \X, \label{eq:weak_CE_only_objective3} \\
            & c\in \H_c,
\end{align}
\end{subequations}
where Constraints \eqref{eq:weak_CE_only_objective2} and \eqref{eq:weak_CE_only_objective3} ensure that an optimal solution $x$ is feasible and lies in $\D$, while Constraints \eqref{eq:weak_CE_only_objective1} ensure that the solution $x$ is optimal for the objective vector $c$. Note that the problem has one constraint for every feasible solution $y\in\X\cap \{ y: \hat a^\top y \ge \hat b\}$,  i.e., the formulation can be of exponential size. Hence, we iteratively generate the constraints by finding the most violating constraint via a separation problem in each iteration. This procedure is the same as in \cite{korikov2023objective} applied to our problem setup. The whole procedure can be found in Algorithm \ref{alg:solve_weak_CE_objective_mutable}. The correctness follows from the same argumentation as in \cite{korikov2023objective}.

\begin{algorithm}
\caption{Solve Problem \eqref{eq:weak_CE_objective_mutable_reformulation}}\label{alg:solve_weak_CE_objective_mutable}
\begin{algorithmic}
\Require $\hat a,\hat b,\hat c,\D, \X, \H$
\Ensure Optimal solution $c^*$ of \eqref{eq:weak_CE_objective_mutable_reformulation}.
\State set $\mathcal Y \leftarrow \emptyset$
\State optimal$\leftarrow$ false
\While{not optimal}
\State Calculate an optimal solution $(\tilde c, \tilde x)$ of problem
\begin{align*}
    \min_{c,x} \ & \delta\left( (c,\hat{a},\hat{b}), (\hat c, \hat a, \hat b)\right) \\
    s.t. \quad & c^\top x \le c^\top y \quad \forall y \in \mathcal Y, \\
    &\hat a^\top x\ge \hat b, \\
    & x \in \D\cap \X, \\
    & c\in \H_c.
\end{align*}
\State Calculate an optimal solution $\tilde y$ of the separation problem
\begin{align*}
\min \ & \tilde c^\top y \\
s.t. \quad & \hat a^\top y \ge \hat b, \\ 
& y\in\X.
\end{align*}
\If{$\tilde c^\top \tilde y < \tilde c^\top \tilde x$}
\State $\mathcal Y\leftarrow \mathcal Y\cup \{ \tilde y\}$
\Else
\State optimal $\leftarrow$ true \Comment{End While Loop}
\EndIf
\EndWhile
 \ \\
\Return $\tilde c$
\end{algorithmic}
\end{algorithm}

\paragraph{Only constraint parameters are mutable.}
In this subsection, only the constraint parameters $a,b$ are mutable,  i.e., $\hat c$ is fixed. The mutable parameter space is denoted as $\mathcal{H}_a \times \mathcal {H}_b$, or in short, $\H_{a,b} \subseteq \mathbb  Z^{n+1}$.

We first define the smallest and the largest value an optimal solution $x\in\D \cap \X$ can have over all parameter configurations $(a,b)\in \H_{a,b}$. The smallest value is
\begin{equation}\label{eq:c_min_hard} 
        c_{\min} := \min_{(a,b)\in\H_{\text{feas}}} \min_{\substack{a^\top x\ge b \\ x\in\D\cap \X}} \  \hat c^\top x
\end{equation}
and the largest value is
\begin{equation}\label{eq:c_max} 
c_{\max} := \max_{(a,b)\in\H_{\text{feas}}} \min_{\substack{a^\top x\ge b \\ x\in\D\cap \X}} \  \hat c^\top x,
\end{equation}
where $H_{\text{feas}}:=\left\{ (a,b)\in \H_{a,b}: \{ x\in \D\cap \X: a^\top x \ge b\} \neq \emptyset \right\}$. 
The following lemma provides a simple way to calculate bounds for $c_{\text{min}}$ and $c_{\text{max}}$.
\begin{lemma}\label{lem:bounds_cmin_cmax}
It holds
\[
c_{\min} \ge \min_{\substack{a_{\max}^\top x \ge b_{\min} \\ x\in \D \cap\X}} \hat c^\top x =: \underline{c}
\]
and 
\[
c_{\max} \le \max_{\substack{a_{\max}^\top x \ge b_{\min} \\ x\in \D \cap\X}} \hat c^\top x =:\bar c,
\]
where $(a_{\max})_i := \max_{(a,b)\in \H_{a,b}} a_i$ and $b_{\min} := \min_{(a,b)\in \H_{a,b}} b$.
\end{lemma}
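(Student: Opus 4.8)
The plan is to reduce both inequalities to a single structural observation: the ``relaxed'' feasible set cut out by $a_{\max}^\top x \ge b_{\min}$ contains the feasible set $\{x\in\D\cap\X : a^\top x\ge b\}$ for \emph{every} admissible pair $(a,b)$. First I would fix an arbitrary $(a,b)\in H_{\text{feas}}$ and establish the inclusion
$\{x\in\D\cap\X : a^\top x\ge b\}\subseteq \{x\in\D\cap\X : a_{\max}^\top x\ge b_{\min}\}$. This is the one place where nonnegativity of the decision variables is essential: since $\D\cap\X\subseteq\mathbb Z_+^n$, any feasible $x$ satisfies $x\ge 0$, while by definition $a\le a_{\max}$ componentwise and $b\ge b_{\min}$. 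Combining these gives $a_{\max}^\top x\ge a^\top x\ge b\ge b_{\min}$, so every $x$ feasible for $(a,b)$ is also feasible for the relaxed constraint, proving the inclusion.

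The remaining steps are just monotonicity of optimization under set inclusion. For the lower bound on $c_{\min}$, minimizing $\hat c^\top x$ over the smaller set $\{x\in\D\cap\X : a^\top x\ge b\}$ yields a value at least $\underline c = \min_{a_{\max}^\top x\ge b_{\min},\, x\in\D\cap\X}\hat c^\top x$, which is the minimum over the larger relaxed set. Since this holds for every $(a,b)\in H_{\text{feas}}$, taking the outer minimum over $(a,b)$ preserves the inequality and gives $c_{\min}\ge\underline c$. For the upper bound on $c_{\max}$, I would observe that for each $(a,b)$ the inner minimizer $x^*$ lies in $\{x\in\D\cap\X : a^\top x\ge b\}\subseteq\{x\in\D\cap\X : a_{\max}^\top x\ge b_{\min}\}$, so its value satisfies $\hat c^\top x^*\le \bar c=\max_{a_{\max}^\top x\ge b_{\min},\, x\in\D\cap\X}\hat c^\top x$; taking the outer maximum over $(a,b)$ again preserves the inequality and yields $c_{\max}\le\bar c$.

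There is no genuinely hard part here; the only subtlety, and the step I would state explicitly, is the nonnegativity argument that converts the componentwise bounds $a\le a_{\max}$ and $b\ge b_{\min}$ into the set inclusion, which would fail without $x\ge 0$. I would also note in passing that whenever $c_{\min}$ and $c_{\max}$ are well-defined we have $H_{\text{feas}}\neq\emptyset$, and then the inclusion forces the relaxed set to be nonempty as well, so $\underline c$ and $\bar c$ are themselves well-defined and the bounds are meaningful.
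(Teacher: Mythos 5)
Your proposal is correct and follows essentially the same route as the paper: both establish the set inclusion $\{x\in\D\cap\X : a^\top x\ge b\}\subseteq\{x\in\D\cap\X : a_{\max}^\top x\ge b_{\min}\}$ via $x\ge 0$, $a\le a_{\max}$, and $b\ge b_{\min}$, and then conclude by monotonicity of the inner optimum under set inclusion, taking the outer $\min$/$\max$ over $(a,b)\in H_{\text{feas}}$ at the end. The only cosmetic difference is that for $c_{\max}$ the paper chains $\min \le \max$ over the small set before enlarging the set, whereas you bound the value of the inner minimizer directly by $\bar c$; your explicit remark on nonemptiness of the relaxed set is a small bonus the paper handles only implicitly.
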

\begin{proof}
To prove the first result consider an arbitrary parameter configuration $(a,b)\in\H_{\text{feas}}$ and any feasible solution $x^*$,  i.e., it holds $a^\top x^* \ge b$. Since $(\hat a, \hat b)\in \H_{a,b}$ such a point $(a,b)$ always exists. Then this solution is feasible for $a_{\max}^\top x \ge b_{\min}$ as well, since
\[
a_{\max}^\top x^* \ge a^\top x^* \ge b \ge  b_{\min},
\]
where the first inequality follows from $x^*\ge 0$ and the definition of $a_{\max}$. Hence, it holds for every $(a,b)\in\H_{\text{feas}}$ that
\[
\min_{\substack{a^\top x\ge b \\ x\in\D\cap \X}} \  \hat c^\top x \ge \min_{\substack{a_{\max}^\top x \ge b_{\min} \\ x\in \D \cap\X}} \hat c^\top x, 
\]
which proves the first result. The second result follows from a similar argumentation. Applying the results above for all $(a,b)\in \H_{\text{feas}}$, we obtain
\[
\min_{\substack{a^\top x\ge b \\ x\in\D\cap \X}} \  \hat c^\top x \le  \max_{\substack{a^\top x\ge b \\ x\in\D\cap \X}} \  \hat c^\top x  \le \max_{\substack{a_{\max}^\top x \ge b_{\min} \\ x\in \D \cap\X}} \hat c^\top x ,
\]
which proves the result.
\end{proof}

Since $\hat c$ and all solutions $x\in\X$ have only integer entries, the objective value of the best solution in $\D\cap \X$ for any $(a,b)\in \H_{\text{feas}}$ is contained in
\begin{equation}
\label{eqn:setC}
\mathcal C := \{ c_{\min}, c_{\min} + 1, \ldots , c_{\max}\}.
\end{equation}

The main idea of the algorithm developed in this section is to iterate over the values in $\mathcal C$ and find the best parameter setting $(a,b)\in\H_{a,b}$ that ensures the existence of a optimal solution in $\D$  with the corresponding optimal value from $\mathcal C$. We first show the following lemma.

\begin{lemma}\label{lem:feasible_weak_CE}
For every $v\in \mathcal C$, the following problem is either infeasible or any optimal solution $(a^*(v),b^*(v))$ is a (not necessarily optimal) weak counterfactual explanation:
\begin{subequations}
\begin{align}
    \min_{a,b,x} \ & \delta\left( (\hat{c},a,b), (\hat c, \hat a, \hat b)\right) \\
    s.t. \quad & a^\top y \le  b - 1  \quad \forall y\in \X: \hat c^\top y \le v -1, \label{eq:master_problem_1}\\
    &\hat c^\top x = v, \label{eq:master_problem_2}\\
    & a^\top x \ge b, \label{eq:master_problem_3}\\
    & x\in\D\cap \X, \label{eq:master_problem_4}\\
    & (a,b)\in \H_{a,b}. \label{eq:master_problem_5}
\end{align}
\label{eq:master_problem}
\end{subequations}
\end{lemma}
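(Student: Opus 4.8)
The plan is to show that \emph{feasibility} of the program in the statement already forces its $(a,b)$-component to satisfy the weak CE condition \eqref{eq:weak_CE_definition} with the fixed objective $\hat c$; optimality is not actually needed for this claim (it matters only for minimizing the distance across the outer loop). So first I would dispose of the trivial case: if the program is infeasible there is nothing to prove. Otherwise, since $\H_{a,b}$ is a bounded discrete set the feasible region is finite and a minimizer $(a^*,b^*,x^*)$ exists, where I write $(a^*,b^*) = (a^*(v),b^*(v))$.

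Next I would read off from the constraints what the witness $x^*$ gives us. Constraints \eqref{eq:master_problem_3} and \eqref{eq:master_problem_4} say that $x^*\in\X$, $a^{*\top}x^*\ge b^*$, and $x^*\in\D$, so $x^*$ is feasible for the modified instance $\{x\in\X: a^{*\top}x\ge b^*\}$ and lies in the favoured space. Constraint \eqref{eq:master_problem_2} pins its objective value to $\hat c^\top x^* = v$.

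The crux is to show that $x^*$ is in fact \emph{optimal} for the modified instance, so that it certifies $\D\cap\argmin_{x\in\X: a^{*\top}x\ge b^*}\hat c^\top x \neq\emptyset$. I would argue by contradiction: take any $y\in\X$ with $a^{*\top}y\ge b^*$ and suppose $\hat c^\top y < \hat c^\top x^* = v$. Because $\hat c\in\Z^n$ and $y\in\X\subseteq\Z_+^n$, the quantity $\hat c^\top y$ is integral, so $\hat c^\top y < v$ is equivalent to $\hat c^\top y \le v-1$; thus $y$ belongs to the index set of constraint \eqref{eq:master_problem_1}, which then forces $a^{*\top}y\le b^*-1 < b^*$, contradicting the assumed feasibility $a^{*\top}y\ge b^*$. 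Hence every feasible $y$ satisfies $\hat c^\top y\ge v = \hat c^\top x^*$, so $x^*$ attains the minimum. Putting this together, $x^*\in\D\cap\argmin_{x\in\X: a^{*\top}x\ge b^*}\hat c^\top x$, which is exactly condition \eqref{eq:weak_CE_definition} for the triple $(\hat c,a^*,b^*)$; therefore $(a^*(v),b^*(v))$ is a weak CE.

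I expect the only delicate point to be the integrality step that turns the threshold ``$\le v-1$'' in \eqref{eq:master_problem_1} into a clean statement of strict suboptimality — this is precisely where the assumptions $\hat c\in\Z^n$ and $\X\subseteq\Z_+^n$ are used, and it also explains why the constraint is written with $b-1$ rather than $b$. The boundary case where the index set of \eqref{eq:master_problem_1} is empty (no $y\in\X$ has value below $v$) needs no special treatment: then every feasible point trivially has value $\ge v$ and $x^*$ is optimal a fortiori.
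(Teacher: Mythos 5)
Your proof is correct and follows essentially the same route as the paper's: existence of a minimizer from discreteness and boundedness of $\H_{a,b}$, feasibility and membership in $\D$ from Constraints \eqref{eq:master_problem_3} and \eqref{eq:master_problem_4}, and optimality of $x^*$ from Constraints \eqref{eq:master_problem_1} and \eqref{eq:master_problem_2}. Your version is in fact slightly more careful than the paper's, since you spell out the integrality step ($\hat c^\top y < v$ iff $\hat c^\top y \le v-1$ for $y\in\X\subseteq\Z_+^n$, $\hat c\in\Z^n$) that the paper leaves implicit, and you correctly note that feasibility alone, not optimality, is what makes $(a,b)$ a weak CE.
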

\begin{proof}
Assume the problem is feasible. Since $\H_{a,b}$ is discrete and bounded the problem must have an optimal solution $(a^*(v),b^*(v),x^*)$. We have to show that this optimal solution is a counterfactual explanation,  i.e., there exists a solution $\tilde x\in \D$ with
\[
\tilde x \in \argmin_{x\in \X: a^*(v)^\top x\ge b^*(v)} \hat{c}^\top x.
\]
We show in the following that $x^*$ fulfills the latter condition. First, Constraint \eqref{eq:master_problem_4} ensures that $x^*\in \D\cap \X$. Constraint \eqref{eq:master_problem_3} ensures that \[
a^*(v)^\top x^* \ge b^*(v)
\]
holds. Second, Constraints \eqref{eq:master_problem_1} and \eqref{eq:master_problem_2} ensure that every solution in $\X$ which has a better objective value than $\hat c^\top x^*$ is infeasible. Hence, $x^*$ must be optimal for the problem, and $(a^*(v),b^*(v))$ is a weak counterfactual explanation.
\end{proof}

The intuition behind Lemma \ref{lem:feasible_weak_CE} is as follows: The first set of constraints ensures that all solutions $y\in \X$ which are better than the considered optimal value $v$ are infeasible for the solution $(a^*(v),b^*(v))$,  i.e., the optimal value of the problem for the parameters $(\hat c,a^*(v),b^*(v))$ is at least $v$. The remaining constraints ensure that a solution $x\in\D$ exists which is feasible for the constraint parameters $(a^*(v),b^*(v))$ and has optimal value $v$. If such a solution exists, this solution must be optimal, and hence $(a^*(v),b^*(v))$ is a weak CE and a possible candidate for an optimal weak CE.

To solve Problem \eqref{eq:master_problem} effectively, a cut generation procedure can be performed, where the constraints for a solution $y$ are separated iteratively. This procedure is shown in Algorithm \ref{alg:solve_master_problem}.
Note that if in Algorithm \ref{alg:solve_master_problem} an upper bound $d^*$ for the cost of a CE is known, we use that bound to cut off non-improving solutions.
From classical constraint generation theory the following theorem directly follows.

\begin{theorem}\label{thm:correctness_algo_MP}
Algorithm \ref{alg:solve_master_problem} calculates an optimal solution of Problem \eqref{eq:master_problem}.
\end{theorem}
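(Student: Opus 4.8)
The plan is to recognize Problem \eqref{eq:master_problem} as a program whose constraint family \eqref{eq:master_problem_1} is indexed by the finite (but possibly exponentially large) set $Y_v := \{ y \in \X : \hat c^\top y \le v-1\}$, and to verify the two properties that render any cutting-plane scheme correct: feasibility together with optimality upon termination, and finite termination. Throughout, let the \emph{relaxed master} denote Problem \eqref{eq:master_problem} with the constraints \eqref{eq:master_problem_1} imposed only for $y$ in the current subset $\mathcal Y \subseteq Y_v$ maintained by Algorithm \ref{alg:solve_master_problem}. Recall that the separation step searches for a $y \in Y_v$ whose constraint is violated by the current relaxed-master solution, e.g.\ by maximizing $a^\top y$ over $y \in \X$ with $\hat c^\top y \le v-1$ and comparing the optimum against $b-1$.

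First I would establish the relaxation property. Since $\mathcal Y \subseteq Y_v$, the relaxed master drops a subset of the constraints \eqref{eq:master_problem_1} and therefore has a feasible region containing that of \eqref{eq:master_problem}; consequently its optimal value is a lower bound on that of \eqref{eq:master_problem}, and any relaxed-master solution that is also feasible for the full problem must be optimal for it. For correctness upon termination, observe that the algorithm stops precisely when separation certifies that no $y \in Y_v$ violates its constraint under the incumbent solution $(a^*,b^*,x^*)$, i.e.\ $a^{*\top} y \le b^*-1$ holds for \emph{every} $y \in Y_v$. Hence $(a^*,b^*,x^*)$ satisfies all of \eqref{eq:master_problem_1}; together with \eqref{eq:master_problem_2}--\eqref{eq:master_problem_5}, which are retained in every relaxed master, it is feasible for \eqref{eq:master_problem}, and the relaxation property then forces it to be optimal.

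For finite termination, I would use that $\X \subseteq \Z_+^n$ is bounded (here $\X = \{0,1\}^n$), so $Y_v$ is finite. The key point is that every separated $\tilde y$ is genuinely new: because the constraints for all $y \in \mathcal Y$ are enforced in the relaxed master, the incumbent satisfies $a^{*\top} y \le b^*-1$ for every $y \in \mathcal Y$, whereas the separated $\tilde y$ violates its constraint, so $\tilde y \notin \mathcal Y$. The set $\mathcal Y$ therefore grows strictly in each non-terminating iteration and, being bounded by $|Y_v| < \infty$, the loop terminates after finitely many iterations. This ``new-cut'' guarantee is the only step requiring genuine care and is the one I would treat as the main obstacle; the remainder is the generic relaxation-plus-separation template, which is why the result follows directly from classical constraint generation theory.
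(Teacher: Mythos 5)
Your proposal is correct and follows exactly the route the paper takes: the paper's proof is a one-line appeal to ``classical constraint generation theory,'' and your argument simply spells out that classical template (relaxation property, feasibility and hence optimality at termination, and finite termination via the new-cut guarantee over the finite index set $Y_v$). The only detail worth noting is that your finiteness argument relies on $\X$ being bounded, which matches the paper's setting ($\X=\{0,1\}^n$ in the knapsack case) but is left implicit in the paper itself.
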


\begin{algorithm}
\caption{Solve Problem \eqref{eq:master_problem}}\label{alg:solve_master_problem}
\begin{algorithmic}
\Require $v, \hat a,\hat b,\hat c,\D, \X, \H_{a,b}$, known upper bound $d^*$ for \eqref{eq:weak_CE_problem}
\Ensure Optimal solution $(a^*(v),b^*(v))$ of \eqref{eq:master_problem}.
\State set $\mathcal Y \leftarrow \emptyset$
\State optimal$\leftarrow$ false
\While{not optimal}
\State Calculate an optimal solution $(\tilde a, \tilde b)$ of problem
\begin{align*}
    \min_{a,b,x} \ & \delta\left( (\hat{c},a,b), (\hat c, \hat a, \hat b)\right) \\
    s.t. \quad & a^\top y \le b - 1 \quad \forall y\in \mathcal Y, \\
    &\hat c^\top x = v, \\
    & a^\top x \ge b, \\
    & x\in\D\cap \X, \\
    & \delta\left( (\hat{c},a,b), (\hat c, \hat a, \hat b)\right) \le d^*, \\
    & (a,b)\in \H_{a,b} .
\end{align*}
\State If problem is infeasible, stop and return \textit{infeasible}.
\State Otherwise, calculate an optimal solution $\tilde y$ of the separation problem
\begin{align*}
\max \ & \tilde a^\top y \\
s.t. \quad & \hat c^\top y \le v - 1, \\ 
& y\in\X.
\end{align*}
\If{$\tilde a^\top \tilde y \ge \tilde b$}
\State $\mathcal Y\leftarrow \mathcal Y\cup \{ \tilde y\}$
\Else
\State optimal $\leftarrow$ true \Comment{End While Loop}
\EndIf
\EndWhile
 \ \\
\Return $(\tilde a, \tilde b)$
\end{algorithmic}
\end{algorithm}

We now show that there must exist a value $v^*\in\mathcal C$ such that $(a^*(v^*),b^*(v^*))$ is an optimal weak counterfactual explanation.

\begin{lemma}\label{lem:weak_CE_optimal_v}
There exists a $v^*\in \mathcal C$, such that any optimal solution $(a^*(v^*),b^*(v^*))$ of Problem \eqref{eq:master_problem} with $v=v^*$ is an optimal weak CE. 
\end{lemma}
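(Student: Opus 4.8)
The plan is to prove the lemma by a two-sided sandwich argument around the cost $d^*$ of an optimal weak CE. Let $(a^\circ, b^\circ)$ be an optimal weak CE with cost $d^* = \delta\left((\hat c, a^\circ, b^\circ), (\hat c, \hat a, \hat b)\right)$, and let $x^\circ \in \D$ be an optimal solution of the correspondingly modified present problem, so that $x^\circ \in \argmin_{x \in \X:\, (a^\circ)^\top x \ge b^\circ} \hat c^\top x$. Define $v^\circ := \hat c^\top x^\circ$; this value will serve as the desired $v^*$.

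First I would show that $v^\circ \in \mathcal C$. Since $x^\circ$ minimizes $\hat c^\top x$ over all feasible $x \in \X$ and $x^\circ$ itself lies in $\D \cap \X$, it also minimizes over $\D \cap \X$, so $v^\circ = \min_{x \in \D \cap \X:\, (a^\circ)^\top x \ge b^\circ} \hat c^\top x$. As $(a^\circ, b^\circ) \in \H_{\text{feas}}$, the definitions \eqref{eq:c_min_hard} and \eqref{eq:c_max} give $c_{\min} \le v^\circ \le c_{\max}$, and integrality of $\hat c$ and $x^\circ$ forces $v^\circ \in \{c_{\min}, \ldots, c_{\max}\} = \mathcal C$.

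Next I would verify that $(a^\circ, b^\circ, x^\circ)$ is feasible for Problem \eqref{eq:master_problem} with $v = v^\circ$. Constraints \eqref{eq:master_problem_2}--\eqref{eq:master_problem_5} hold immediately by construction ($\hat c^\top x^\circ = v^\circ$, the point $x^\circ$ is feasible and lies in $\D \cap \X$, and $(a^\circ, b^\circ) \in \H_{a,b}$). The only substantive check is Constraint \eqref{eq:master_problem_1}: for every $y \in \X$ with $\hat c^\top y \le v^\circ - 1$, optimality of $x^\circ$ rules out feasibility, so $(a^\circ)^\top y < b^\circ$; integrality of $a^\circ$, $b^\circ$, and $y$ then upgrades this to $(a^\circ)^\top y \le b^\circ - 1$, exactly as required. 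Consequently, the optimal value of Problem \eqref{eq:master_problem} at $v^\circ$ is at most $d^*$.

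Finally I would close the sandwich. By Lemma \ref{lem:feasible_weak_CE}, any optimal solution $(a^*(v^\circ), b^*(v^\circ))$ of Problem \eqref{eq:master_problem} at $v^\circ$ is itself a weak CE, and its cost equals the optimal value of that problem, which was just bounded above by $d^*$. Since $d^*$ is the minimum cost over all weak CEs, this cost is also at least $d^*$; hence it equals $d^*$ and $(a^*(v^\circ), b^*(v^\circ))$ is an optimal weak CE. Setting $v^* := v^\circ$ finishes the proof. The main obstacle, and the only place where the discreteness assumption on $\H$ is genuinely used, is the integrality step in verifying Constraint \eqref{eq:master_problem_1}: without integrality one could only conclude the strict inequality $(a^\circ)^\top y < b^\circ$, which does not match the $\le b^\circ - 1$ form of the master constraint, so the optimal weak CE might fail to be feasible for the master problem.
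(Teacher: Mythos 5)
Your proof is correct and follows essentially the same route as the paper's: pick an optimal weak CE together with an optimal favored solution $x^\circ$, set $v^*=\hat c^\top x^\circ$, show $v^*\in\mathcal C$ and that the triple is feasible for Problem \eqref{eq:master_problem} via the integrality upgrade of $(a^\circ)^\top y < b^\circ$ to $(a^\circ)^\top y \le b^\circ-1$, then invoke Lemma \ref{lem:feasible_weak_CE} to conclude optimality. If anything, your write-up is slightly more explicit than the paper's in two places — the cost sandwich $d^*\le \cdot \le d^*$ at the end, and the justification that $v^*$ also equals the minimum over $\D\cap\X$ (needed to compare with $c_{\min}$ and $c_{\max}$) — but these are refinements of the same argument, not a different one.
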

\begin{proof}
Let $(a^*,b^*)\in\H_{a,b}$ be an optimal weak CE,  i.e., an optimal solution of Problem \eqref{eq:weak_CE_problem}. Then there exists a solution $x^*\in \D$ which is optimal for the problem 
\begin{align*}
    v^*:= \min & \ \hat c ^\top x \\
    s.t. \quad & (a^*)^\top x \ge b^*, \\
    & x\in\X .
\end{align*}
Since $(a^*,b^*)\in \H_{\text{feas}}$, $c_{\min} \le v^*\le c_{\max}$ must hold. And since $\hat c\in \mathbb Z^n$ and $\X\subset \mathbb Z_+^n$, $v^*$ is integer and it must hold that $v^*\in \mathcal C$. Hence, $(a^*,b^*,x^*)$ is feasible for Constraints \eqref{eq:master_problem_2} -- \eqref{eq:master_problem_5} for $v=v^*$. Furthermore, from the optimality of $x^*$ and the integrality of $y$ and $\hat c,a^*,b^*$, it follows that
\[
(a^*)^\top y \le b^* -1 \quad \forall y\in\X: \hat c^\top y \le  v^* -1.
\]
We can conclude that $(a^*,b^*,x^*)$ is feasible for Problem \eqref{eq:master_problem} with $v=v^*$. Since every optimal solution of \eqref{eq:master_problem} is a weak CE by Lemma \ref{lem:feasible_weak_CE}, it follows that $(a^*(v^*),b^*(v^*))$ must be an optimal weak CE.
\end{proof}

The latter two lemmas indicate that we can solve Problem \eqref{eq:weak_CE_problem} by iterating over all values $v\in\mathcal C$ and solve the corresponding Problem \eqref{eq:master_problem}. The best solution $(a^*(v),b^*(v))$ found over all these values must then be an optimal solution of \eqref{eq:weak_CE_problem}. While this idea is feasible, it may happen that $\mathcal C$ contains a large number of values, and solving \eqref{eq:weak_CE_problem} for each is too expensive. To cut off unpromising solutions, we derive a lower bound for the objective function values in the following lemma.

\begin{lemma}\label{lem:lower_bound}
For a given $\bar v\in \mathbb Z$ the optimal value of the following problem is a lower bound for every feasible master problem \eqref{eq:master_problem} with integer $v\ge \bar v$:
\begin{equation}\label{eq:lower_bound_problem}
\begin{aligned}
    \min_{a,b} \ & \delta\left( (\hat{c},a,b), (\hat c, \hat a, \hat b)\right) \\
    s.t. \quad & a^\top y \le b - 1 \quad \forall y\in \X: \hat c^\top y \le \bar v -1, \\
    & (a,b)\in \H_{a,b}.
\end{aligned}
\end{equation}
\end{lemma}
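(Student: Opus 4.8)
The plan is to argue that \eqref{eq:lower_bound_problem} is simply a relaxation of \eqref{eq:master_problem} for every integer $v \ge \bar v$. The crucial observation is that both problems minimize the \emph{same} objective $\delta\left((\hat c, a, b), (\hat c, \hat a, \hat b)\right)$, and \eqref{eq:lower_bound_problem} involves only the variables $(a,b)$. Hence it suffices to show that the projection onto the $(a,b)$-coordinates of the feasible set of \eqref{eq:master_problem} (for any integer $v \ge \bar v$) is contained in the feasible set of \eqref{eq:lower_bound_problem}. Once this containment is established, the optimal value of \eqref{eq:lower_bound_problem} is the minimum of the same objective over a superset, and is therefore no larger than the optimal value of \eqref{eq:master_problem}, i.e., a valid lower bound.

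First I would fix an integer $v \ge \bar v$ and take an arbitrary feasible point $(a, b, x)$ of \eqref{eq:master_problem}. The membership constraint $(a,b) \in \H_{a,b}$ from \eqref{eq:master_problem_5} is identical in both problems, so it transfers directly. It then remains to verify the separation constraints of \eqref{eq:lower_bound_problem}, namely $a^\top y \le b - 1$ for all $y \in \X$ with $\hat c^\top y \le \bar v - 1$. The key step is the monotonicity observation that, because $\bar v \le v$, we have the set inclusion
\[
\{ y \in \X : \hat c^\top y \le \bar v - 1 \} \subseteq \{ y \in \X : \hat c^\top y \le v - 1 \}.
\]
Constraint \eqref{eq:master_problem_1} guarantees $a^\top y \le b - 1$ for every $y$ in the larger set on the right, and in particular for every $y$ in the smaller set on the left. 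Thus $(a,b)$ satisfies all constraints of \eqref{eq:lower_bound_problem}.

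Having established the containment, I would conclude by noting that the variable $x$ together with constraints \eqref{eq:master_problem_2}--\eqref{eq:master_problem_4} are simply dropped in \eqref{eq:lower_bound_problem}, so passing from \eqref{eq:master_problem} to \eqref{eq:lower_bound_problem} only enlarges the feasible region while leaving the objective unchanged. This yields the claimed lower bound uniformly over all integer $v \ge \bar v$. There is no genuinely hard step here---the argument is a standard relaxation/monotonicity bound---and the only subtlety to be careful about is the direction of the set inclusion: the looser threshold $\bar v - 1$ induces the \emph{smaller} index set of solutions $y$, so its separation constraints are implied by (a subset of) those enforced by the master problem.
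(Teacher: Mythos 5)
Your proof is correct and takes essentially the same approach as the paper: fix a feasible $(a,b,x)$ of \eqref{eq:master_problem} for any integer $v \ge \bar v$, note that $(a,b)\in\H_{a,b}$ transfers directly, and use the inclusion $\{ y\in\X : \hat c^\top y \le \bar v - 1\} \subseteq \{ y\in\X : \hat c^\top y \le v - 1\}$ to conclude that the separation constraints of \eqref{eq:lower_bound_problem} are implied by Constraints \eqref{eq:master_problem_1}. The paper's proof is the same relaxation argument, stated slightly more tersely.
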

\begin{proof}
Consider an arbitrary $v\ge \bar v$ for which \eqref{eq:master_problem} is feasible and a corresponding arbitrary feasible solution $(a(v),b(v),x(v))$ of \eqref{eq:master_problem}. We show that $(a(v),b(v))$ is feasible for \eqref{eq:lower_bound_problem} to prove the result. Clearly, $(a(v),b(v))\in\H_{a,b}$ is true. Furthermore, from Constraints \eqref{eq:master_problem_1} it follows that 
\[
a(v)^\top y \le b(v) - 1 \quad \forall y\in \X: \hat c^\top y \le v-1.
\]
Since $\bar v\le v$, $(a(v),b(v))$ must also be feasible for \eqref{eq:lower_bound_problem}. This proves the result.
\end{proof}

Problem \eqref{eq:lower_bound_problem} can be solved again by a constraint generation procedure, where we iteratively generate constraints for corresponding solutions $y\in \X$.
An important property of Problem \eqref{eq:lower_bound_problem} is the following:
\begin{corollary}
    The lower bound provided by Lemma \ref{lem:lower_bound} is non-decreasing in $\bar v$.
\end{corollary}
\begin{proof}
This follows immediately from the fact that for $v \ge \bar v$:
$$\{y\in \X: \hat c^\top y \le \bar v -1\} \subseteq \{y\in \X: \hat c^\top y \le  v -1\}$$
\end{proof}

The resulting lower bounds can be used as a stopping criterion when iterating over all values $v\in\mathcal C$. More precisely, iterate through $\mathcal C$ starting with $v=c_{\min}$ and increasing it. For each $v\in\mathcal C$, we can solve \eqref{eq:master_problem} to obtain a feasible weak CE (if the corresponding problem is feasible). We can store the best feasible weak CE $(a_{\text{best}},b_{\text{best}})$ and the corresponding distance $d_{\text{best}}=\delta(a_{\text{best}},\hat a) + \delta(b_{\text{best}},\hat b)$. At the same time, for the current $v$, we can calculate the lower bound \eqref{eq:lower_bound_problem} with $\bar v = v$. If this lower bound is at least as large as the best known value $d_{\text{best}}$, we can terminate early and $(a_{\text{best}},b_{\text{best}})$ is an optimal weak CE. The whole procedure is presented in Algorithm \ref{alg:weak_only_constraint_mutable}.

\begin{algorithm}
\caption{Optimal Weak CE for Mutable Constraint Parameters.}\label{alg:weak_only_constraint_mutable}
\begin{algorithmic}
\Require $\hat a,\hat b,\hat c,\D, \H_{a,b}$
\Ensure Optimal weak CE $(a^*,b^*)$.
\State Best known solution: $(a^*,b^*) = \emptyset$
\State Best known distance: $d^* = \infty$
\State Best known lower bound: $lb = -\infty$
\State Calculate bounds $\underline{c}$, $\bar{c}$ as in Lemma \ref{lem:bounds_cmin_cmax}.
\ForAll{$v= \underline{c}, \underline{c} + 1,  \ldots , \bar{c}$}
\State Set $lb$ to the optimal value of \eqref{eq:lower_bound_problem} with $\bar v = v$.
\If{$lb \ge d^*$}
\State Stop and \textbf{return} $(a^*,b^*)$.
\Else
\State Calculate opt. solution $(a^*(v),b^*(v))$ of Problem \eqref{eq:master_problem} by Algorithm \ref{alg:solve_master_problem}. 
\If{
$\delta(a^*(v),\hat a) + \delta(b^*(v),\hat b) < d^*$}
\State $(a^*,b^*) \leftarrow (a^*(v),b^*(v))$
\State $d^*\leftarrow \delta(a^*(v),\hat a) + \delta(b^*(v),\hat b)$
\EndIf
\EndIf
\EndFor \\
\Return $(a^*,b^*)$
\end{algorithmic}
\end{algorithm}

\begin{remark}
  Before running Algorithm \ref{alg:weak_only_constraint_mutable}, we 
  verify whether $(\hat a, \hat b)$ is already a weak CE, which then must be the optimal one. This verification  involves solving two deterministic integer optimization problems as described in Section \ref{sec:weak_CE}. If the check is not successful, we run Algorithm \ref{alg:weak_only_constraint_mutable}.
\end{remark}

\begin{theorem}\label{theorem:weak_CE_ab_correctness}
    Algorithm \ref{alg:weak_only_constraint_mutable} calculates an optimal weak CE for the case where only the parameters $a$ and $b$ are mutable.
\end{theorem}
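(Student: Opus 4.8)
The plan is to assemble the three preceding lemmas into a single optimality argument, so I would begin by recording two structural facts that make the loop well-posed. First, by Lemma~\ref{lem:bounds_cmin_cmax} we have $\underline{c}\le c_{\min}$ and $\bar{c}\ge c_{\max}$, so the index range $\{\underline{c},\ldots,\bar{c}\}$ iterated by Algorithm~\ref{alg:weak_only_constraint_mutable} contains all of $\mathcal{C}$. Second, by Theorem~\ref{thm:correctness_algo_MP} together with Lemma~\ref{lem:feasible_weak_CE}, whenever the inner call to Algorithm~\ref{alg:solve_master_problem} returns a feasible point $(a^*(v),b^*(v))$, that point is a genuine weak CE. Hence the stored pair $(a^*,b^*)$ is, at every moment of execution, either empty or a valid weak CE whose cost equals the bookkeeping value $d^*$.

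Next I would invoke Lemma~\ref{lem:weak_CE_optimal_v} to fix an index $v^*\in\mathcal{C}$ for which any optimal solution of \eqref{eq:master_problem} at $v=v^*$ is an optimal weak CE; write $\mathrm{OPT}$ for its cost. The goal is to show $d^*=\mathrm{OPT}$ at termination. Because $d^*$ only ever records the cost of an actual weak CE, and $\mathrm{OPT}$ is the minimum cost over all weak CEs, the inequality $d^*\ge\mathrm{OPT}$ holds throughout (trivially when $(a^*,b^*)=\emptyset$ and $d^*=\infty$). It therefore suffices to establish $d^*\le\mathrm{OPT}$ upon termination.

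The core of the argument is the early-stopping analysis, which is where I expect the main subtlety to lie, since termination is justified purely by a lower bound and must never discard the optimal index $v^*$. I would split on how the loop ends. If the loop runs to completion, then the iteration $v=v^*$ is executed, the master problem at $v^*$ is solved, and $d^*$ is updated to at most $\mathrm{OPT}$. If instead the loop stops early at some index $v_0$ because $lb\ge d^*$, I would use Lemma~\ref{lem:lower_bound}: the value $lb$ computed at $\bar v=v_0$ lower-bounds the optimal value of every feasible master problem \eqref{eq:master_problem} with $v\ge v_0$. If $v^*\ge v_0$, then $\mathrm{OPT}\ge lb\ge d^*$, giving $d^*\le\mathrm{OPT}$. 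If $v^*<v_0$, then since the loop visits indices in increasing order and did not stop before $v_0$, the iteration $v=v^*$ was already executed, so again $d^*$ was updated to at most $\mathrm{OPT}$. In every case $d^*\le\mathrm{OPT}$, which combined with $d^*\ge\mathrm{OPT}$ yields $d^*=\mathrm{OPT}$ and certifies the returned $(a^*,b^*)$ as an optimal weak CE. The monotonicity corollary for \eqref{eq:lower_bound_problem} reinforces this, since it guarantees the stopping test only becomes tighter as $v$ grows.

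Finally I would dispose of the degenerate possibility of a premature stop while $d^*=\infty$. This requires $lb=\infty$, i.e., \eqref{eq:lower_bound_problem} is infeasible at $v_0$, which forces \eqref{eq:master_problem} to be infeasible for every $v\ge v_0$ (its constraint set~\eqref{eq:master_problem_1} is a superset of the lower-bound constraints). Together with the absence of any weak CE found for $v<v_0$, this means no weak CE exists at all, so returning the empty solution is the correct output; in particular, if a weak CE does exist, Lemma~\ref{lem:weak_CE_optimal_v} makes \eqref{eq:master_problem} feasible at $v^*$ and this case cannot arise. The main obstacle is thus not any single calculation but the careful bookkeeping in the case analysis above, ensuring that the lower bound valid for all larger $v$ never causes the optimal index to be skipped.
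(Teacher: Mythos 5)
Your proof is correct and follows essentially the same route as the paper: the paper's own proof is a one-line assembly of Lemma~\ref{lem:bounds_cmin_cmax}, Lemma~\ref{lem:feasible_weak_CE}, Lemma~\ref{lem:lower_bound}, and Theorem~\ref{thm:correctness_algo_MP}, and your argument is exactly that assembly written out in full, including the early-stopping case analysis. You also correctly invoke Lemma~\ref{lem:weak_CE_optimal_v} (guaranteeing the optimal index $v^*$ is in the iterated range), which the paper's citation list omits but which is clearly needed, so your write-up is if anything more complete than the paper's.
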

\begin{proof}
    The correctness follows from Lemma \ref{lem:bounds_cmin_cmax}, \ref{lem:feasible_weak_CE} and \ref{lem:lower_bound} and Theorem \ref{thm:correctness_algo_MP}.
\end{proof}

\paragraph{All parameters are mutable.}
In this section, all parameters in the objective and the constraint are mutable. In this case the weak CE problem \eqref{eq:weak_CE_problem} can be formulated as follows:
\begin{align*}
    \min_{c,a,b,x} \ & \delta\left( (c,a,b), (\hat c, \hat a, \hat b)\right) \\
    s.t. \quad & c^\top x \le c^\top y \quad \forall y\in \X: a^\top y \ge b,\\
    & a^\top x \ge b, \\
    & x\in \D \cap \X, \\
    & (c,a,b)\in \H.
\end{align*}
Since the number and type of constraints depend on the decision variables $a$ and $b$, the algorithms from the previous sections are not applicable. However, \eqref{eq:weak_CE_problem} can be reformulated as
\begin{subequations}\label{eq:weak_CE_all_mutable_big-M_reformulation}
    \begin{align}
    \notag\min_{c,a,b,x,z} \ & \delta\left( (c,a,b), (\hat c, \hat a, \hat b)\right) \\
    s.t. \quad & c^\top x \le c^\top y + M z_y \quad \forall y\in \X, \label{eq:constraints_all_parameters_big-M_1}\\
    & a^\top y \ge b - Mz_y \quad \forall y\in \X, \label{eq:constraints_all_parameters_big-M_2}\\
    &a^\top y \le b - 1 + M(1-z_y) \quad \forall y\in \X, \label{eq:constraints_all_parameters_big-M_3}\\
    & x\in \D\cap \X, \label{eq:constraints_all_parameters_big-M_4}\\
    & a^\top x \ge b, \label{eq:constraints_all_parameters_big-M_5}\\
    & z_y\in \{ 0,1\} \quad \forall y\in \X,\\
    & (c,a,b)\in \H,
\end{align}
\end{subequations}
where $M$ is a sufficiently large value. Constraints \eqref{eq:constraints_all_parameters_big-M_2} and \eqref{eq:constraints_all_parameters_big-M_3} ensure that $z_y=0$ if and only if $y$ is feasible for the choice of constraint parameters $(a,b)$, and $z_y=1$, otherwise. Then, Constraints \eqref{eq:constraints_all_parameters_big-M_1} ensure that the chosen solution $x$ has the best objective value below all feasible solutions $y \in \X$, while for all infeasible $y \in \X$ the constraint is redundant due to the big-M value. 
Constraints \eqref{eq:constraints_all_parameters_big-M_4} and \eqref{eq:constraints_all_parameters_big-M_5} ensure that the solution $x$ is feasible for the chosen parameters $a,b$ and lies in $\D$. 

For linear representable distance functions $\delta$, formulation \eqref{eq:weak_CE_all_mutable_big-M_reformulation} is bilinear, an thus can be solved by state-of-the-art optimization solvers. However, it can contain an exponential number of constraints. Similar to the previous algorithms it can be solved by iteratively generating solutions $y\in\X$, corresponding variables $z_y$ and corresponding constraints. The procedure is presented in Algorithm \ref{alg:solve_weak_CE_all_parameters}.

\begin{algorithm}
\caption{Solve Problem \eqref{eq:weak_CE_all_mutable_big-M_reformulation}}\label{alg:solve_weak_CE_all_parameters}
\begin{algorithmic}
\Require $\hat a,\hat b,\hat c,\D, \X, \H$
\Ensure Optimal solution $(a^*,b^*,c^*)$ of \eqref{eq:weak_CE_all_mutable_big-M_reformulation}.
\State set $\mathcal Y \leftarrow \emptyset$
\State optimal$\leftarrow$ false
\While{not optimal}
\State Calculate an optimal solution $(\tilde a, \tilde b, \tilde c, \tilde x)$ of problem
    \begin{align*}
    \min_{c,a,b,x,z} \ & \delta\left( (c,a,b), (\hat c, \hat a, \hat b)\right) \\
    s.t. \quad & c^\top x \le c^\top y + M z_y \quad \forall y\in \mathcal Y, \\
    & a^\top y \ge b - Mz_y \quad \forall y\in \mathcal Y, \\
    &a^\top y \le b - 1 + M(1-z_y) \quad \forall y\in \mathcal Y, \\
    & x\in \D\cap \X, \\
    & a^\top x \ge b, \\
    & z_y\in \{ 0,1\} \quad \forall y\in \mathcal Y,\\
    & (c,a,b)\in \H.
\end{align*}
\State Calculate an optimal solution $\tilde y$ of the separation problem
\begin{align*}
\min \ &\tilde c^\top y \\
s.t. \quad &\tilde a^\top y \ge \tilde b, \\
& y\in\X.
\end{align*}
\If{$\tilde c^\top \tilde y < \tilde c^\top \tilde x$}
\State $\mathcal Y\leftarrow \mathcal Y\cup \{ \tilde y\}$
\Else
\State optimal $\leftarrow$ true \Comment{End While Loop}
\EndIf
\EndWhile
 \ \\
\Return $(\tilde a, \tilde b, \tilde c)$
\end{algorithmic}
\end{algorithm}

\begin{theorem}
    Algorithm \ref{alg:solve_weak_CE_all_parameters} calculates an optimal weak CE.
\end{theorem}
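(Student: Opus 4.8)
The plan is to argue in two stages: first that the big-M model \eqref{eq:weak_CE_all_mutable_big-M_reformulation} is an exact reformulation of \eqref{eq:weak_CE_problem} in the all-mutable case, and second that Algorithm \ref{alg:solve_weak_CE_all_parameters} is a correct and finite constraint-generation scheme for \eqref{eq:weak_CE_all_mutable_big-M_reformulation}. Chaining these gives that the returned triple is an optimal weak CE, since by Definition \ref{def:weak_CE} an optimal solution of \eqref{eq:weak_CE_problem} is exactly an optimal weak CE.

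For the first stage, I would fix a sufficiently large constant $M$. Since $\H$ is bounded and $\X\subseteq\mathbb Z_+^n$ is finite, all quantities $c^\top y$ and $a^\top y-b$ arising over feasible $(c,a,b)\in\H$ and $y\in\X$ are bounded, so a finite $M$ exists that renders the relevant constraints vacuous whenever the associated binary variable switches them off. As explained in the paragraph preceding the algorithm, for each $y\in\X$ the pair \eqref{eq:constraints_all_parameters_big-M_2}--\eqref{eq:constraints_all_parameters_big-M_3} forces $z_y=0$ exactly when $y$ is feasible for $(a,b)$ and $z_y=1$ otherwise; given this, \eqref{eq:constraints_all_parameters_big-M_1} is binding precisely on the feasible $y$ and there reads $c^\top x\le c^\top y$, so $x$ attains the minimum over the feasible region, while \eqref{eq:constraints_all_parameters_big-M_4}--\eqref{eq:constraints_all_parameters_big-M_5} force $x\in\D\cap\X$ feasible. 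Hence $(c,a,b,x)$ is feasible for the reformulation if and only if $(c,a,b)$ is a weak CE certified by the optimal solution $x\in\D$, and the objectives coincide.

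For the second stage, I would invoke the standard row-generation argument. The master problem solved in each iteration is \eqref{eq:weak_CE_all_mutable_big-M_reformulation} with the constraint families restricted to $y\in\mathcal Y\subseteq\X$, hence a relaxation, so its optimal value is a lower bound on the optimum of the full problem. The separation problem $\min\{\tilde c^\top y:\ y\in\X,\ \tilde a^\top y\ge\tilde b\}$ returns the best feasible solution for the current parameters; if its value is at least $\tilde c^\top\tilde x$, then $\tilde x$ is optimal for $(\tilde c,\tilde a,\tilde b)$, and one checks that assigning $z_y=0$ to every feasible $y$ and $z_y=1$ to every infeasible $y$ extends $(\tilde a,\tilde b,\tilde c,\tilde x)$ to a point feasible for all of \eqref{eq:weak_CE_all_mutable_big-M_reformulation}. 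A point that is feasible for the full problem while attaining the relaxation's optimal value must be globally optimal, which yields correctness at termination.

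Finally I would establish finiteness. Whenever the loop does not terminate, the separation returns a $\tilde y$ with $\tilde c^\top\tilde y<\tilde c^\top\tilde x$, and such a $\tilde y$ cannot already lie in $\mathcal Y$: every $y\in\mathcal Y$ is either assigned $z_y=0$ in the master, forcing $\tilde c^\top\tilde x\le\tilde c^\top y$, or $z_y=1$, forcing $\tilde a^\top y\le\tilde b-1$ so that $y$ is infeasible and cannot be returned by the separation. Thus a genuinely new element of the finite set $\X$ is added each round, bounding the number of iterations by $|\X|$. I expect the main obstacle to be the feasibility-extension step in the third paragraph: one must verify carefully that at termination the $z_y$ for $y\notin\mathcal Y$ can be assigned consistently so that the terminal master solution is feasible for the \emph{unrelaxed} model, rather than merely satisfying the separated optimality cuts; this is where the exactness of the big-M encoding from the first stage is needed.
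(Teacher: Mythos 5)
Your proof is correct and takes essentially the same route the paper intends: exactness of the big-M encoding in \eqref{eq:weak_CE_all_mutable_big-M_reformulation} combined with a standard constraint-generation (relaxation plus separation) argument, which the paper compresses into a one-line appeal to the reasoning behind Theorem~\ref{theorem:weak_CE_ab_correctness}, whereas you spell out the lower-bound property of the restricted master, feasibility of the terminal solution for the unrelaxed model, and finite termination. The only point worth flagging is that your assumption that $\X$ is finite, which the paper never states explicitly, is genuinely needed both for the big-M formulation to be a finite program and for your counting argument bounding the number of iterations by $|\X|$.
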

\begin{proof}
    This follows analogously to Theorem \ref{theorem:weak_CE_ab_correctness}.
\end{proof}

\begin{remark}\label{rem:more_mutable_constraints}
We restricted the analysis of this section to the case of a single mutable constraint. The reason is that in the master problems \eqref{eq:master_problem} and \eqref{eq:weak_CE_all_mutable_big-M_reformulation} we model the condition that certain solutions $y$ are infeasible. For multiple mutable constraints this would involve modeling the condition $Ax \ngeq b$ which involves the introduction of big-M constraints, leading to the equivalent formulation
\begin{align*}
    &a_i^\top x \le b - 1 + M(1-z_i), \quad i=1,\ldots ,m, \\
    &\sum_{i=1}^{m} z_i \ge 1, \\
    &z\in \{ 0,1\}^n.
\end{align*}
In our iterative framework, this results in generating a large set of big-M constraints and corresponding binary variables. Since the resulting methods are computationally intractable even for few dimensions, we refrain from presenting this case in detail. On the other hand, for the case of a single mutable constraint the condition $a^\top x \ngeq b$ can be easily formulated as $a^\top x \le b - 1$.
\end{remark}

\subsection{Strong Counterfactual Explanations} \label{sec:strong_CE}

As in the case of weak CE algorithms, we can develop three algorithms corresponding to the three cases of the mutable parameter space listed in Section \ref{sec:problem_definition}.

\paragraph{Only objective function parameters are mutable.}
In this subsection, only the objective parameters are mutable,  i.e., $\hat a$ and $\hat b$ are fixed. The mutable parameter space is denoted as $\H_c \subset \mathbb  R^n$. In this case, we can reformulate \eqref{eq:strong_CE_problem} as
\begin{subequations}\label{eq:strong_CE_objective_mutable_reformulation}
\begin{align}
     \min_{x,c} \ & \delta\left( (c,\hat{a},\hat{b}), (\hat c, \hat a, \hat b)\right) \\
            s.t.\quad &  c^\top x \le c^\top y - 1 \quad \forall y \in \X\setminus \D:  \hat a^\top y\ge \hat b, \label{eq:strong_CE_only_objective1} \\
            & \hat a^\top x\ge \hat b, \label{eq:strong_CE_only_objective2}\\
            & x \in \D\cap \X, \label{eq:strong_CE_only_objective3} \\
            & c\in \H_c,
\end{align}
\end{subequations}
where Constraints \eqref{eq:strong_CE_only_objective2} and \eqref{eq:strong_CE_only_objective3} ensure that a solution $x$ from $\D$ is feasible, while Constraints \eqref{eq:strong_CE_only_objective1} ensure that any feasible solution in $\X\setminus \D$ cannot be optimal. It follows that for an optimal solution $c^*$ of \eqref{eq:strong_CE_objective_mutable_reformulation} all optimal solutions must be contained in $\D$. Note that \eqref{eq:strong_CE_objective_mutable_reformulation} can have one constraint for every feasible solution $y\in\X\setminus \D$, of which there can be exponentially many. Hence, similar to Algorithm \ref{alg:solve_weak_CE_objective_mutable} we iteratively generate the constraints by finding the most violating constraint via the separation problem
\begin{align*}
    \min \ & c^\top y \\
    s.t. \quad & \hat a^\top y \ge \hat b, \\
    & y\in \X\setminus \D.
\end{align*}

Note that the essential difference to the weak CE algorithm is the constraint $y\in \X\setminus \D$ in the separation problem. For simple cases of $\D$ (e.g., if $\D$ is described by a single constraint or by variable fixations), this condition can be modeled by linear constraints. However, in general this condition may involve the use of big-M constraints, similarly as described in Remark \ref{rem:more_mutable_constraints}.

\paragraph{Only constraint parameters are mutable.}
The ideas for the weak CEs can be extended to the strong CE case. We assume again that $\hat c$ only has integer entries and define the set $\mathcal C$ as in \eqref{eqn:setC}. Analogously to Lemma \ref{lem:feasible_weak_CE}, we can verify the following result: For every $v\in \mathcal C$, the following problem is either infeasible or an optimal solution thereof is a (not necessarily optimal) strong counterfactual explanation:
\begin{subequations}
\begin{align}
    \min_{a,b,x} \ & \delta\left( (\hat{c},a,b), (\hat c, \hat a, \hat b)\right) \\
    s.t. \quad &  a^\top y \le  b - 1 \quad \forall y\in \X\setminus \D: \hat c^\top y \le v, \label{eq:master_problem_strong1}\\
    &\hat c^\top x = v, \label{eq:master_problem_strong2}\\
    & a^\top x \ge b, \label{eq:master_problem_strong3}\\
    & x\in\D\cap \X, \label{eq:master_problem_strong4}\\
    & (a,b)\in \H_{a,b}. \label{eq:master_problem_strong5}
\end{align}
\label{eq:master_problem_strongCE}
\end{subequations}
If the strong CE problem is feasible, Constraints \eqref{eq:master_problem_strong2} -- \eqref{eq:master_problem_strong4} ensure that there exists a solution $x\in \D$ which is feasible and this solution has objective value $v$. Constraints \eqref{eq:master_problem_strong1} ensure that every solution, which is not in $\D$ and has objective value at least as good as $v$, is not feasible. Hence, feasible solutions outside of $\D$ are not optimal. 

The same result of Lemma \ref{lem:weak_CE_optimal_v} holds for the strong CE version, i.e., there exists a $v^*\in\mathcal C$, such that any optimal solution of \eqref{eq:master_problem_strongCE} is an optimal solution of \eqref{eq:strong_CE_problem}. Likewise, similar to the lower bound in Lemma \ref{lem:lower_bound}, it is easy to see that the following problem provides a lower bound for every problem \eqref{eq:master_problem_strongCE} with integer $v\ge \bar v$:

\begin{subequations}
\begin{align}
    \min_{a,b} \ & \delta\left( (\hat{c},a,b), (\hat c, \hat a, \hat b)\right) \\
    s.t. \quad &  a^\top y \le  b - 1  \quad \forall y\in \X\setminus \D: \hat c^\top y \le \bar v, \label{eq:lower_bound_strong1}\\
    & (a,b)\in \H_{a,b}.\label{eq:lower_bound_strong5}
\end{align}
\label{eq:lower_bound_strongCE}
\end{subequations}

Hence Algorithm \ref{alg:solve_master_problem} and Algorithm \ref{alg:weak_only_constraint_mutable} can be directly adapted for the strong CE case by replacing each subproblem with the adapted version presented above. The details of the corresponding algorithms are presented in the Appendix as Algorithm \ref{alg:solve_master_problem_strong} and Algorithm \ref{alg:strong_only_constraint_mutable}.

\paragraph{All parameters are mutable.}

In this section all parameters in the objective and the constraints are mutable. In this case, the strong CE problem \eqref{eq:strong_CE_problem} can be reformulated as
\begin{subequations}\label{eq:strong_CE_all_params_formulation}
\begin{align}
    \min_{c,a,b,x} \ & \delta\left( (c,a,b), (\hat c, \hat a, \hat b)\right) \\
    s.t. \quad & c^\top x \le c^\top y -1 \quad \forall y\in \X\setminus \D: a^\top y \ge b, \label{eq:strong_all_parameters_1}\\
    & a^\top x \ge b, \\
    & x\in \D \cap \X, \\
    & (c,a,b)\in \H.
\end{align}
\end{subequations}
The main difference to the weak CE case is that \eqref{eq:strong_all_parameters_1} ensures that all solutions which are not in $\D$ cannot be optimal. 
As for the weak CEs, we can then reformulate Problem \eqref{eq:strong_CE_all_params_formulation} as

\begin{subequations}\label{eq:strong_CE_all_mutable_big-M_reformulation}
    \begin{align}
    \min_{c,a,b,x,z} \ & \delta\left( (c,a,b), (\hat c, \hat a, \hat b)\right) \\
    s.t. \quad & c^\top x \le c^\top y - 1 + M z_y \quad \forall y\in \X\setminus \D, \label{eq:strong_constraints_all_parameters_big-M_1}\\
    & a^\top y \ge b - Mz_y \quad \forall y\in \X\setminus \D, \label{eq:strong_constraints_all_parameters_big-M_2}\\
    &a^\top y \le b - 1 + M(1-z_y) \quad \forall y\in \X \setminus \D, \label{eq:strong_constraints_all_parameters_big-M_3}\\
    & x\in \D\cap \X, \label{eq:strong_constraints_all_parameters_big-M_4}\\
    & a^\top x \ge b, \label{eq:strong_constraints_all_parameters_big-M_5}\\
    & z_y\in \{ 0,1\} \quad \forall y\in \X,\\
    & (c,a,b)\in \H,
\end{align}
\end{subequations}
where $M$ is a large enough big-M value. Constraints \eqref{eq:strong_constraints_all_parameters_big-M_2} and \eqref{eq:strong_constraints_all_parameters_big-M_3} ensure that $z_y=0$ if and only if $y$ is feasible for the choice of constraint parameters $(a,b)$ and $z_y=1$ otherwise. Hence, Constraints \eqref{eq:strong_constraints_all_parameters_big-M_1} ensure that the chosen solution $x$ has the best objective value below all feasible solutions $y \in \X$ while for all infeasible $y \in \X$ the constraint is redundant due to the big-M value. 
Constraints \eqref{eq:strong_constraints_all_parameters_big-M_4} and \eqref{eq:strong_constraints_all_parameters_big-M_5} ensure that the selected solution $x$ is feasible for the chosen parameters $a,b$ and lies in $\D$. 

Analogously to the weak CEs, formulation \eqref{eq:strong_CE_all_mutable_big-M_reformulation} is bilinear and  can be solved by state-of-the-art optimization solvers. However, it can contain an exponential number of constraints. Similar to the previous algorithms it can be solved by iteratively generating solutions $y\in\X$, corresponding variables $z_y$ and corresponding constraints. The procedure is presented in Algorithm \ref{alg:solve_strong_CE_all_parameters} in the Appendix.

\section{Computational Study}\label{sec:computational_results}

In this section, we perform numerical experiments on a set of test problems to evaluate the performance of the proposed algorithms in obtaining both weak and strong CEs. 

\subsection{Setup of Knapsack Instances}

\paragraph{Data.} We tested the algorithms on knapsack benchmark instances from \href{https://github.com/likr/kplib}{\texttt{kplib}}\footnote{See \href{https://github.com/likr/kplib}{https://github.com/likr/kplib}.}, which we interpreted as cover instances to be consistent with the notation throughout this work.
The instances were originally designed according to the procedure outlined in Kellerer, Pferschy \& Pisinger~\citep{kellerer2004exact}.
We used instances with a data range of $1000$\footnote{Note that Algorithms \ref{alg:weak_only_constraint_mutable}, \ref{alg:solve_weak_CE_all_parameters}, \ref{alg:strong_only_constraint_mutable} and \ref{alg:solve_strong_CE_all_parameters} are sensitive to changes in the data range by design,  i.e., changes in the data range will lead to a proportional change in the number of subproblems, and thus the runtime. } and instance sizes in $[10,20,30,40]$, which were generated by slicing larger instances.
Since it is well-known that the practical hardness of knapsack problems depends on the instance structure ~\citep{Pisinger2005,SmithMiles2021}, e.g., if items value/weight ratios are correlated, the problems tend to be harder to solve in practice~\citep{Pisinger2005}, we use both $20$ uncorrelated and strongly correlated instances each.
In total, we consider $4\times2\times20=160$ testing instances per favored solution space and CE type.

\paragraph{Favored solution spaces.}
Let $\mathcal I^+,\mathcal I^- \subset \{ 1,\ldots ,n\}$ with $\mathcal I^+ \cap \mathcal I^-=\emptyset$ be index sets of variables that are to be fixed to $1$ or $0$, respectively. Then, we define two sets for \emph{positive and negative fixations} as
$
\D^+ = \{ x\in \{0,1\}^n: x_i = 1 \ \forall i\in\mathcal I^+\}$ and $\D^- = \{ x\in \{0,1\}^n: x_i = 0 \ \forall i\in\mathcal I^-\}$
respectively. In our computational experiments, the number of elements to be fixed is set to the following fraction: 
$$ 
\left\lceil 0.1\cdot\frac{b}{\frac{1}{n}\sum_{i \in \Bar{n}}a_i}\right\rceil.
$$ 
Intuitively, this means $10\%$ of the items in an expected cover solution are fixed to some value.
In each case, the present cover problem was first solved. Then, the favored elements were randomly selected among the items that were not part of the present solution.

We also define a favored solution space $\D^{\ge}$ for given parameters $\alpha\in\{ 0,1\}^n$ and $\beta\in \N$:
\[\D^{\ge} = \{ x\in \{0,1\}^n: \sum_{i\in\mathcal I} \alpha_i x_i \ge \beta\}.\]
For our computational experiments, $10\%$ of all items are randomly selected to be part of the constraint and $\beta=1$, i.e., at least one item from the constraint set must be selected.

For each of the three types of favored solution spaces, we need to be able to take the complement of $\mathcal{D}$ to model the constraint $y\in \mathcal X\setminus \mathcal D$ in the separation problems for the strong CEs. The complement of $\mathcal{D}^\ge$ is given by the knapsack constrained set
\[
\{ x\in \{0,1\}^n: \sum_{i\in\mathcal I} \alpha_i x_i \le \beta - 1\}.
\]
Similarly, the complements of $\mathcal D^+$ and $\mathcal D^-$ are
\[ 
\{ x\in \{0,1\}^n: \sum_{i\in\mathcal I^+} x_i \le |\mathcal I^+| - 1\},\] \[ 
\{ x\in \{0,1\}^n: \sum_{i\in\mathcal I^-} x_i \ge 1\},
\]
respectively.

\paragraph{Mutable parameter space.}
For the mutable parameter spaces, we consider only settings where parameters can be changed homogeneously by a flat percentage of $5\%$. For the distance $\delta$ we use the sum of absolute deviations, i.e., the $\ell_1$-norm, as the objective function. Thus, in total we get $160\times 3\times 2 = 960$ different instances.

\paragraph{Implementation.}
We implemented all proposed algorithms in Python using Gurobi $12.0.0$ as a solver.
All data and code are made available upon request.
All computations were ran on RWTH Aachen Universities' High-Performance Computing (HPC) CLAIX on Intel Xeon 8468 Sapphire nodes with a single $2.1$ GHz core and $5$GB RAM each. We used  a maximum runtime of 10h. A full implementation of all code is provided under \cite{codetopaper}
\iftechreport
    \footnote{See \href{https://zenodo.org/records/18110317}{https://zenodo.org/records/18110317}}.
\else
    .
\fi

\subsection{Computational Results on Knapsack Instances}

\paragraph{Existence and cost of counterfactual explanations. } In total, we found an optimal counterfactual for $668$ instances among $960$. Of the remainder, in $10$ instances of size $30$ and in $32$ instances of size $40$ the runtime of $10$h was insufficient to prove optimality or infeasibility. A further $250$ instances were flagged as infeasible, generally due to finding an infeasible lower bound subproblem.
Of the $250$ infeasible instances, only four were strongly correlated, and the remainder were among the uncorrelated instances. 

Finding a feasible solution required on average a relative change of $0.25\pm0.48\%$ in weights, with a range of  $0\%-3\%$. However, note that for many uncorrelated instances, we did not find a feasible solution. Thus, for those instances, a larger mutable parameter space might be necessary.
Note that larger parameter spaces imply larger ranges of $\mathcal C$ as well. 
In our study, up to $2590$ different potential objective values were considered per instance with an average of $1160\pm 597$ values.
This excludes the effect of the lower bound proposed in Lemma \ref{lem:lower_bound} shown in the next subsection.

\paragraph{Effect of lower bound.} We find that the lower bound reduces the number of iterations required to solve an instance to optimality by more than $50\%$.
Figures \ref{fig:pd_integral_strong} and \ref{fig:pd_integral_weak} illustrate this for strong and weak CEs, respectively.

\begin{figure}[h!]
    \centering
    \includegraphics[width=0.7\linewidth]{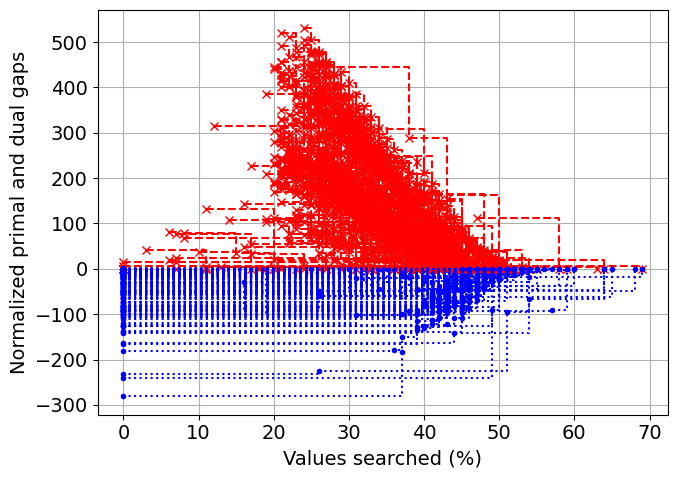}
    \caption{Progress towards optimal solution for $480$ strong CE instances of size $10,20,30, 40$ that were solved to optimality. \textcolor{red}{Red} are \textcolor{red}{primal} and \textcolor{blue}{blue} are \textcolor{blue}{dual} bounds. The $x$-axis is normalized to the total number of values in $\mathcal C$. The $y$-axis is normalized around each instance's optimum.}
    \label{fig:pd_integral_strong}
\end{figure}

\begin{figure}[h!]
    \centering
    \includegraphics[width=0.7\linewidth]{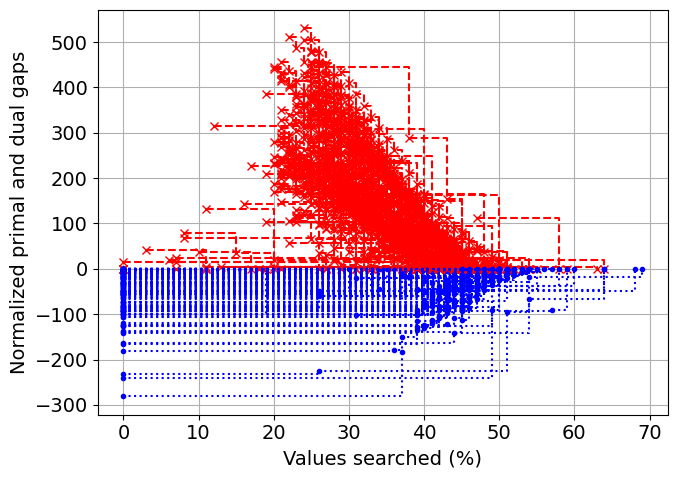}
    \caption{Progress towards solution for $480$ weak CE  instances of size $10,20,30, 40$ that were solved to optimality. \textcolor{red}{Red} are \textcolor{red}{primal} and \textcolor{blue}{blue} are \textcolor{blue}{dual} bounds. The $x$-axis is normalized to the total number of values in $\mathcal C$. The $y$-axis is normalized around each instance's optimum.}
    \label{fig:pd_integral_weak}
\end{figure}

We also observe that good solutions are centered around the halfway point of $50\%$, which tends to be close to the original objective value.
Thus heuristically searching for small changes to the weights that lead to solutions close to the original objective function value appears to be a promising search strategy. However, this sacrifices the lower bound from Lemma \ref{lem:lower_bound} that significantly accelerates the exact approach.
The observations above apply for both weak and strong CEs equally.

\paragraph{Scaling of runtime.} As to be expected, runtime increases significantly with instance size. Figures
\ref{fig:runtimes_strong} and \ref{fig:runtimes_weak} illustrate this. The number of cuts required scales equally with instance size, as can be seen in Figures \ref{fig:cuts_strong} and \ref{fig:cuts_weak}. In summary, CE can be computed using the algorithms provided. However, runtime and memory demands scale exponentially with instance size, which is to be expected for a $\Sigma_2^p$-complete problem. 

\begin{figure}[h!]
    \centering

    \begin{subfigure}{0.48\linewidth}
        \centering
        \includegraphics[width=\linewidth]{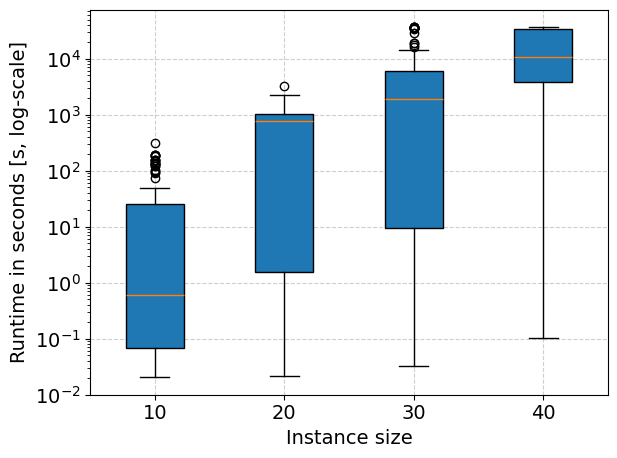}
        \caption{Strong CE runtimes}
        \label{fig:runtimes_strong}
    \end{subfigure}
    \hfill
    \begin{subfigure}{0.48\linewidth}
        \centering
        \includegraphics[width=\linewidth]{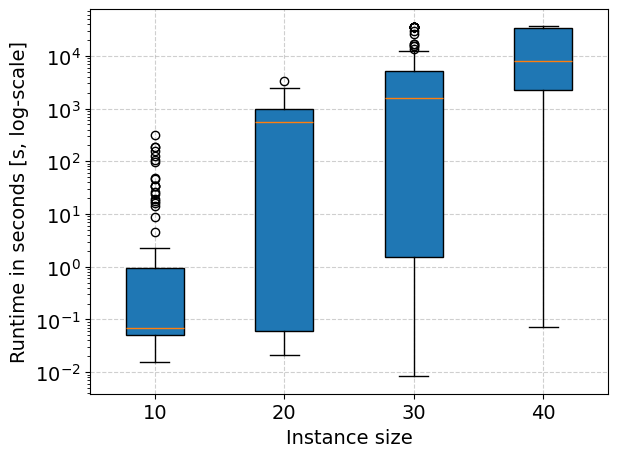}
        \caption{Weak CE runtimes}
        \label{fig:runtimes_weak}
    \end{subfigure}

    \vspace{0.5em}

    \begin{subfigure}{0.48\linewidth}
        \centering
        \includegraphics[width=\linewidth]{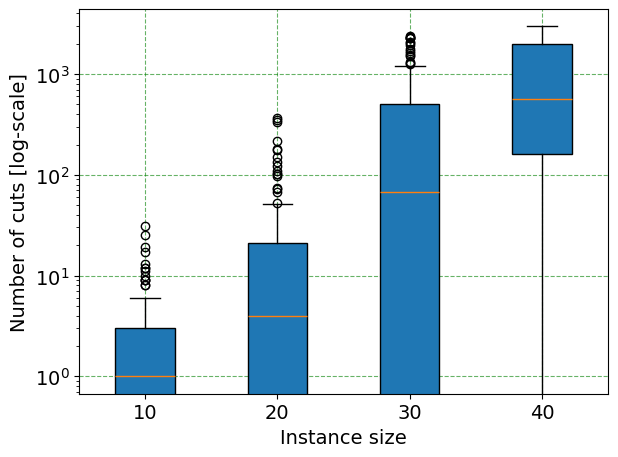}
        \caption{Strong CE number of cuts}
        \label{fig:cuts_strong}
    \end{subfigure}
    \hfill
    \begin{subfigure}{0.48\linewidth}
        \centering
        \includegraphics[width=\linewidth]{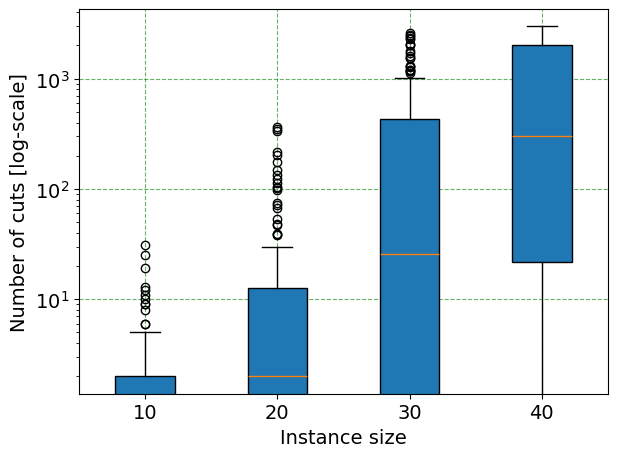}
        \caption{Weak CE number of cuts}
        \label{fig:cuts_weak}
    \end{subfigure}

    \caption{Boxplot comparison of runtimes (top row) and total number of generated cuts (bottom row) for $480$ strong and weak CE instances with sizes $10$, $20$, $30$, and $40$. Instances not solved to optimality are assigned $36000$s ($10$h).}
    \label{fig:runtime_cut_comparison}
\end{figure}

We can also illustrate this using primal-dual gaps. For that we restrict the data displayed to the 1st and 2nd item from each size, i.e $10$, $20$, $30$, and $40$. This is shown in Figure \ref{fig:pd_integral_weak_sample}.

\begin{figure}[h!]
    \centering
    \includegraphics[width=0.7\linewidth]{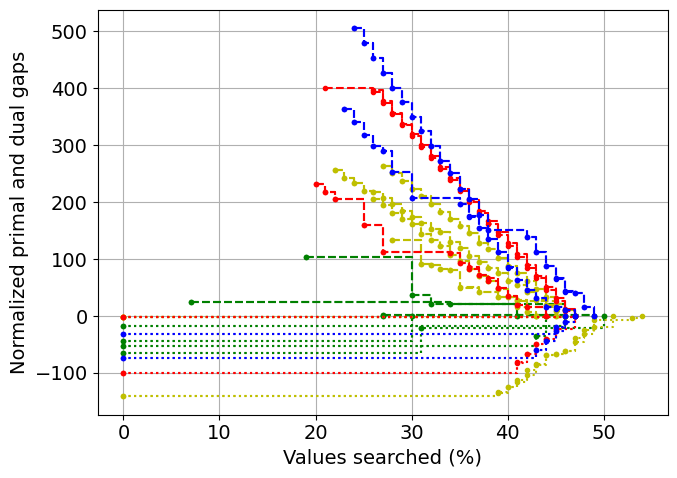}
    \caption{Progress towards solution for $64/96$ strong CE instances solved to optimality. \textcolor{blue}{Blue} instances are size $40$, \textcolor{red}{red} instances are size $30$, \textcolor{yellow}{yellow} and \textcolor{green}{green} correspond to sizes $20, 10$. The $x$-axis is normalized to the total number of values in $\mathcal C$. The $y$-axis is normalized around each instance's optimum.}
    \label{fig:pd_integral_weak_sample}
\end{figure}

\paragraph{Deriving insights from counterfactuals.} Computing CEs can also enable us to quantify (partial) decision values in terms of their contribution to an objective.
Although strongly correlated instances are generally considered to be harder, the fact that all items have a similar cost/weight ratio means small changes in weights are sufficient to add/remove an element from a solution.

As an example, consider CEs for the cover instance in Table \ref{tab:strongly_correlated_instance}.

\begin{table}[h!]
    \centering
    \caption{Exemplary strongly correlated cover instance. For a demand/capacity of $2358$, an optimal cover picks items $1$, $2$, and $7$ for a total objective of $2701$.}
    \begin{tabular}{c||c|c|c|c|c|c|c|c|c|c}
         Item & 0 & 1 & 2 & 3 & 4 & 5 & 6 & 7 & 8 & 9 \\ \hline\hline
         Weight & 135 & 848 & 764& 256& 496& 450& 652 & 789 &  94 & 29 \\
         Costs & 235 &  948 &  864 &  356 &  596 &  550 &  752 &  889 &  194 &  129
    \end{tabular}\label{tab:strongly_correlated_instance}
\end{table}

We compute a strong CE for every item that is not currently part of the optimal solution, with a positive fixation for each. More precisely, for each item $i$ that is not part of the optimal solution we calculate the minimal parameter change needed such that item $i$ is included in all optimal solutions. The results are given in Table \ref{tab:strong_ce_comparison}.

\begin{table}[h!]
    \centering
    \caption{Comparison of CEs for different items in a strongly correlated instance sorted by cost. The cost/weight ratio is based on the initial weights, items $[1, 2, 7]$ have a ratio of $0.89 / 0.88 / 0.88$ respectively.}
    \begin{tabular}{c||c|c|l|c|}
        Item & CE cost & $\frac{cost}{weight}$ ratio &Changes to weights & New solution \\ \hline\hline
        0 & 44 & 0.57 & 1:-9, 2:-5, 7:-30 & [0, 1, 2, 6] \\
        4 & 44 & 0.83 & 1:-41, 2:-3 & [2, 4, 5, 6]\\
        5 & 44 & 0.81 & 1:-42, 7:-2 & [2, 4, 5, 6]\\
        6 & 44 & 0.87 & 1:-42, 7:-2 & [2, 4, 5, 6]\\
        8 & 51 & 0.48 & 2:-5, 4:-2, 6:+1, 7:-39, 8:+4 & [1, 2, 6, 8] \\
        3 & 64 & 0.72 & 2:-38, 5:+21, 7:-6 & [1, 3, 5, 7]\\
        9 & 65 & 0.22 & 2:+42, 3:+22, 9:+1 & [1, 2, 6, 9]\\
    \end{tabular}
    \label{tab:strong_ce_comparison}
\end{table}

We observe different types of modifications: To enforce item $0$, the incumbent solution is made less attractive, thus item $7$ is then substituted by items $0$ and $6$.
For items $4,5$ and $6$, we see that CEs themselves contain alternatives, e.g., to ensure either item is part of an optimal solution multiple equivalent changes of weights lead to the new solution of $[2, 4, 5, 6]$.
Item $8$ is significantly less attractive, enforcing it to be part of a solution requires a higher CE cost, even more so for items $3$ and $9$.
Finally, we observe that simple metrics such as cost/weight are insufficient to capture modeling dynamics -- item $9$ is very good in terms of cost/weight ratio, but still unattractive for optimal solutions. Thus, CEs allow for more differentiated evaluation of (partial) solutions.

\subsection{An Application to the Resource Constrained Shortest Path Problem}
In many cases, even considering few mutable constraints allows us to derive insights into an instance. Next, we examine the following setting: The task is to route traffic through a given graph, where using an arc comes with a toll. Given a limited toll budget this problem can be modeled as a \emph{Resource Constrained Shortest Path (RSCP)}. A CE in the toll parameters asks now how we have to change the toll such that an optimal path would use a certain edge or not.

\begin{figure}[h!]
    \centering
    \includegraphics[width=0.7\linewidth]{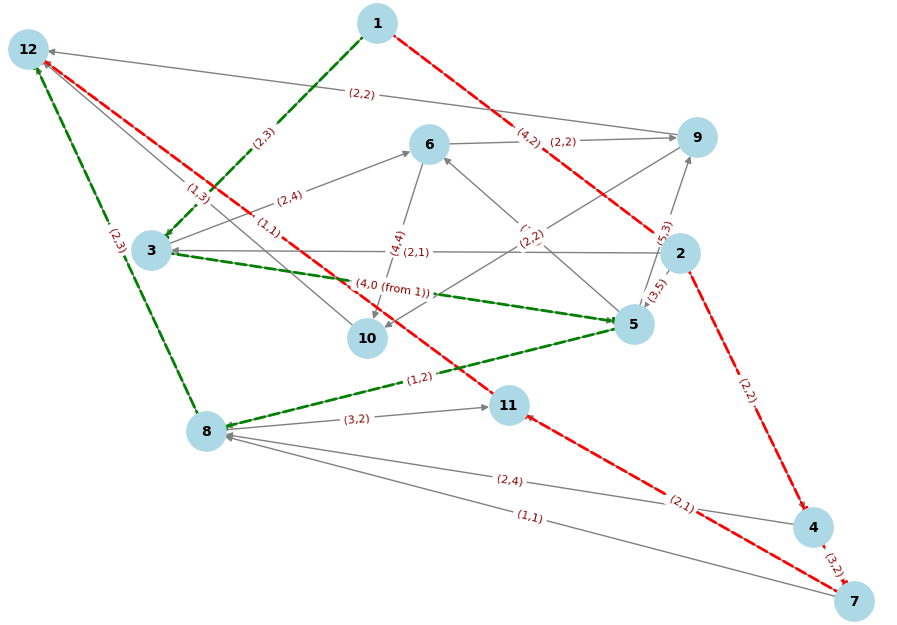}
    \caption{CE for RCSP problem. Edge values indicate (length, toll). The \textcolor{red}{red} path is the original shortest path for a toll budget of $8$; the least change in toll that leads to $e_{(1,2)}$ not being part of an optimal solution is given by lowering the toll of $e_{(3,4)}$ by one unit, resulting in the \textcolor{green}{green} RCSP.}
    \label{fig:rcsp}
\end{figure}

Figure \ref{fig:rcsp} illustrates a counterfactual in such a setting for a toy example. 
If we assume a toll budget of $8$ units, initially, the red path is the shortest path, with total length $12$. 
Assume the arc $e_{(1,2)}$ goes through the city center and we would like to avoid routing traffic through there whenever possible.
Our question would then be: What is the smallest change so that the arc $e_{(1,2)}$ is no longer part of a shortest path? 
The result is given by the green path, which has total length $9$ and requires a single change to the data, i.e., lowering the toll of arc $e_{(3,5)}$ by one unit. 
In our example, this would mean that if we wanted to avoid routing traffic through the city center $e_{(1,2)}$, the best way to do so is to waive the toll on $e_{(3,5)}$ making it more attractive to bypass the city center.

In terms of mathematics, note that the RCSP contains multiple flow conservation constraints. However, since those are immutable, they do not affect our algorithm. Furthermore, we can bound the cost values $c$  by finding the optimal RCSP for each feasible value of $b$, i.e., the toll budget.

\paragraph{Solving RCSP pricing problems.} 
Finally, we test our algorithm on a set of benchmark instances from literature. For that, we use the SPPRCLIB data set provided by Jepsen et al.~\citep{jepsen2008branch}.
The instances represent pricing problems from solving capacitated vehicle routing problems with column generation, see~\citep{Fukasawa2006}. As such, they contain negative node weights, as well as a resource constraint on the nodes.
In Appendix~\ref{app:rcsp_ilp}, we provide a ILP formulation of such problems. We restrict ourselves to the case that source and sink are the first and last indexed node, respectively, and round node weights to the nearest thousand and scaled down accordingly, e.g. $43 000$ to $43$. The instances are complete graphs with $45-262$ nodes.
We investigate what would be the least change to the weights in the resource constraint by a maximum of $10\%$ to exclude the current first node from the shortest path. In the context of a pricing problem, this would imply computing a least change to the duals that would have led to a different column being added.  

Of $44$ instances, we found a weak CE for $22$. Of these $22$ instances, for $5$ the original solution was already a counterfactual explanation. Finding a CE took an average of $15\pm19$s and a maximum of $63$s. In about half the cases, the new RSCPs only have small alterations, for the other half, a completely new path is chosen. 
We note that we can thus compute CEs also for larger and more complex problems, provided the mutable part is limited. In the context of RSCP, this might allow insights into reoccurring patterns or the lack thereof in column generation. 

\section{Conclusion}

In this work, we studied weak and strong counterfactual explanations (CEs) for integer (linear) optimization problems. We showed that in the general case, computing a counterfactual increases complexity by one level in the polynomial hierarchy, that is, $\NP$-complete problems have a counterfactual that is $\Sigma_2^p$-complete, which is equivalent to general bilevel integer linear programming. We present algorithms to calculate counterfactual explanations for integer linear problems and implemented a tractable version thereof, which we evaluated on cover problem instances. We found that the proposed approach works well for instances with up to $40$ items.

There are several open research directions to extend our work. Further research could consider and exploit specific (combinatorial) structures of the present problem to detect situations where calculating CEs is on a lower rank of the polynomial hierarchy and computationally less demanding. Developing heuristics also presents another promising avenue for further research. These methods can deliver computationally efficient solutions that closely approximate the optimal results. Beyond their standalone utility, these heuristic solutions can also serve as effective starting points for the exact algorithms that we proposed here.


\section{Code and Data Disclosure}\label{sec:Code and Data Disclosure} The code and data to support the numerical experiments in this paper can be found at \cite{codetopaper}
\iftechreport
    , respectively under the link \href{https://zenodo.org/records/18110317}{https://zenodo.org/records/18110317}.
\else
    .
\fi

\iftechreport
\section*{Acknowledgements}
Computations were performed with computing resources granted by RWTH Aachen University High Performance computing.
We thank Yosuke Onoue, who build and hosts the \href{https://github.com/likr/kplib}{\texttt{klib}} knapsack instance library.
We thank Christina Büsing and Hector Geffner for enabling this research collaboration, as well as Christoph Grüne for insightful discussions on complexity theory.

\bibliographystyle{alpha} 
\bibliography{references}
\fi

\clearpage

\iftechreport
\appendix
\else
\begin{APPENDICES}
\fi

\section*{Appendix}

\section{Algorithms for Strong CEs}
\begin{algorithm}
\caption{Solve Problem \eqref{eq:master_problem_strongCE}}\label{alg:solve_master_problem_strong}
\begin{algorithmic}
\Require $v, \hat a,\hat b,\hat c,\D, \X, \H_{a,b}$, known upper bound $d^*$ for \eqref{eq:weak_CE_problem}
\Ensure Optimal solution $(a^*(v),b^*(v))$ of \eqref{eq:master_problem}.
\State set $\mathcal Y \leftarrow \emptyset$
\State optimal$\leftarrow$ false
\While{not optimal}
\State Calculate an optimal solution $(\tilde a, \tilde b)$ of problem
\begin{align*}
    \min_{a,b,x} \ & \delta( a,\hat a) + \delta( b,\hat b ) \\
    s.t. \quad & a^\top y \le b - 1 \quad \forall y\in \mathcal Y, \\
    &\hat c^\top x = v, \\
    & a^\top x \ge b, \\
    & x\in\D\cap \X, \\
    & \delta(a,\hat a) + \delta (b,\hat b) \le d^*, \\
    & (a,b)\in \H_{a,b} .
\end{align*}
\State If problem is infeasible, stop and return \textit{infeasible}.
\State Otherwise, calculate an optimal solution $\tilde y$ of the separation problem
\begin{align*}
\max \ & \tilde a^\top y \\
s.t. \quad & \hat c^\top y \le v, \\ 
& y\in\X\setminus \D.
\end{align*}
\If{$\tilde a^\top \tilde y \ge \tilde b$}
\State $\mathcal Y\leftarrow \mathcal Y\cup \{ \tilde y\}$
\Else
\State optimal $\leftarrow$ true \Comment{End While Loop}
\EndIf
\EndWhile
 \ \\
\Return $(\tilde a, \tilde b)$
\end{algorithmic}
\end{algorithm}

\begin{algorithm}
\caption{Optimal Strong CE for Mutable Constraint Parameters.}\label{alg:strong_only_constraint_mutable}
\begin{algorithmic}
\Require $\hat a,\hat b,\hat c,\D, \H_{a,b}$
\Ensure Optimal weak CE $(a^*,b^*)$.
\State Best known solution: $(a^*,b^*) = \emptyset$
\State Best known distance: $d^* = \infty$
\State Best known lower bound: $lb = -\infty$
\State Calculate bounds $\underline c$, $\bar c$ as Lemma \ref{lem:bounds_cmin_cmax}.
\ForAll{$v=\underline c, \underline c + 1,  \ldots , \bar c$}
\State Set $lb$ to the optimal value of \eqref{eq:lower_bound_strongCE} with $\bar v = v$.
\If{$lb \ge d^*$}
\State Stop and \textbf{return} $(a^*,b^*)$.
\Else
\State Calculate opt. solution $(a^*(v),b^*(v))$ of Problem \eqref{eq:master_problem_strongCE} by Algorithm \ref{alg:solve_master_problem_strong}. 
\If{$\delta(a^*(v),\hat a) + \delta(b^*(v),\hat b) < d^*$}
\State $(a^*,b^*) \leftarrow (a^*(v),b^*(v))$
\State $d^*\leftarrow \delta(a^*(v),\hat a) + \delta(b^*(v),\hat b)$
\EndIf
\EndIf
\EndFor \\
\Return $(a^*,b^*)$
\end{algorithmic}
\end{algorithm}

\begin{algorithm}
\caption{Solve Problem \eqref{eq:strong_CE_all_mutable_big-M_reformulation}}\label{alg:solve_strong_CE_all_parameters}
\begin{algorithmic}
\Require $\hat a,\hat b,\hat c,\D, \X, \H$
\Ensure Optimal solution $(a^*,b^*,c^*)$ of \eqref{eq:strong_CE_all_mutable_big-M_reformulation}.
\State set $\mathcal Y \leftarrow \emptyset$
\State optimal$\leftarrow$ false
\While{not optimal}
\State Calculate an optimal solution $(\tilde a, \tilde b, \tilde c, \tilde x)$ of problem
    \begin{align*}
    \min_{c,a,b,x,z} \ & \delta(c,\hat c) + \delta(a,\hat a) + \delta(b,\hat b) \\
    s.t. \quad & c^\top x \le c^\top y - 1 + M z_y \quad \forall y\in \mathcal Y, \\
    & a^\top y \ge b - Mz_y \quad \forall y\in \mathcal Y, \\
    &a^\top y \le b - 1 + M(1-z_y) \quad \forall y\in \mathcal Y, \\
    & x\in \D\cap \X, \\
    & a^\top x \ge b, \\
    & z_y\in \{ 0,1\} \quad \forall y\in \mathcal Y,\\
    & (c,a,b)\in \H.
\end{align*}
\State Calculate an optimal solution $\tilde y$ of the separation problem
\begin{align*}
\min \ &\tilde c^\top y \\
s.t. \quad &\tilde a^\top y \ge \tilde b, \\
& y\in\X\setminus \D.
\end{align*}
\If{$\tilde c^\top \tilde y < \tilde c^\top \tilde x$}
\State $\mathcal Y\leftarrow \mathcal Y\cup \{ \tilde y\}$
\Else
\State optimal $\leftarrow$ true \Comment{End While Loop}
\EndIf
\EndWhile
 \ \\
\Return $(\tilde a, \tilde b, \tilde c)$
\end{algorithmic}
\end{algorithm}

\section{Integer Programming Formulations for the Resource Constrained Shortest Path Problems}\label{app:rcsp_ilp}
For the toy example, let $c_{ij}$ be costs for arc $ij \in A$ and $w_{ij}$ the respective weights. Furthermore, we designate a start node $s$ and a target node $t$.
Then, we define a decision variable $x_{ij} \in \{0,1\}$ that is $1$ if arc $ij$ is part of the shortest path and $0$ otherwise. The RCSPP is then given by:

\begin{align}
\text{min} \quad 
  & \sum_{(i,j)\in A} c_{ij} x_{ij} \notag\\\notag
\text{s. t.} \quad 
  & \sum_{j:(i,j)\in A} x_{ij} - \sum_{j:(j,i)\in A} x_{ji} =
    \begin{cases}
      1, & i = s, \\
     -1, & i = t, \\
      0, & \text{else,}
    \end{cases}
    && \forall i \in V, \\
  & \sum_{(i,j)\in A} w_{ij} \, x_{ij} \le b, \tag{R}
    &&  \\
  & x_{ij} \in \{0,1\}, 
    && \forall (i,j)\in A.\notag
\end{align}
In our setting, we assume that only the resource constraint $R$ is mutable. 

For the SPPRCLIB instances, the resource constraints are formulated on nodes, not edges. Furthermore, nodes also incur cost $c_k$ for a node $k$.
Thus, we use a modified formulation, as seen below:

\begin{align}
\text{min} \quad 
  & \sum_{(i,j)\in A} c_{ij} x_{ij} + \sum_{(i,j) \in A} \frac{c_i+c_j}{2}x_{ij} + \frac{c_{s} + c_{t}}{2}\notag\\\notag
\text{s. t.} \quad 
  & \sum_{j:(i,j)\in A} x_{ij} - \sum_{j:(j,i)\in A} x_{ji} =
    \begin{cases}
      1, & i = s, \\
     -1, & i = t, \\
      0, & \text{else,}
    \end{cases}
    && \forall i \in V, \\
  & \sum_{(i,j) \in A} \frac{w_i+c_j}{2}x_{ij} + \frac{w_{s} + w_{t}}{2} \le b, \tag{R}
    &&  \\
  & x_{ij} \in \{0,1\}, 
    && \forall (i,j)\in A.\notag
\end{align}

\iftechreport
\else
\end{APPENDICES}

\ACKNOWLEDGMENT{Computations were performed with computing resources granted by RWTH Aachen University High Performance computing.
We thank Yosuke Onoue, who build and hosts the \href{https://github.com/likr/kplib}{\texttt{klib}} knapsack instance library.
We thank Christina Büsing and Hector Geffner for enabling this research collaboration, as well as Christoph Grüne for insightful discussions on complexity theory.}

\bibliographystyle{informs2014} 
\bibliography{references}
\fi

\end{document}